\pdfoutput=1
\documentclass[11pt]{article}

\usepackage[margin=1in]{geometry}
\usepackage[T1]{fontenc}
\usepackage{lmodern}
\usepackage{amsmath,amssymb,amsthm,mathtools}
\usepackage{mathrsfs}
\usepackage{stmaryrd}
\usepackage{enumitem}
\usepackage{microtype}
\usepackage{hyperref}
\usepackage{bookmark}

\hypersetup{
  colorlinks=true,
  linkcolor=blue,
  citecolor=blue,
  urlcolor=blue,
  pdftitle={A strict gap above the Brito--Chacon--Naveira bound for the Sasaki volume on higher odd spheres},
  pdfauthor={Jonas Matuzas},
  pdfsubject={A strict smooth gap above the Brito--Chacon--Naveira lower bound},
  pdfkeywords={unit vector fields, Sasaki metric, Cartesian currents, calibrations, strict lower bound, odd spheres}
}

\allowdisplaybreaks
\setlist{itemsep=0.2em,topsep=0.4em}
\theoremstyle{plain}
\newtheorem{theorem}{Theorem}[section]
\newtheorem{proposition}[theorem]{Proposition}
\newtheorem{lemma}[theorem]{Lemma}
\newtheorem{corollary}[theorem]{Corollary}
\theoremstyle{definition}
\newtheorem{definition}[theorem]{Definition}
\newtheorem{example}[theorem]{Example}
\theoremstyle{remark}
\newtheorem{remark}[theorem]{Remark}
\newtheorem{problem}[theorem]{Problem}

\newcommand{\R}{\mathbb{R}}
\newcommand{\Z}{\mathbb{Z}}
\newcommand{\HH}{\mathcal{H}}
\newcommand{\vol}{\operatorname{vol}}
\newcommand{\dist}{\operatorname{dist}}
\newcommand{\M}{\mathbf{M}}
\newcommand{\Icur}{\mathbf{I}}

\newcommand{\Cm}{\mathcal{C}_m}
\newcommand{\gr}{\mathrm{gr}}
\newcommand{\res}{\mathrm{res}}
\newcommand{\VolS}{\operatorname{Vol}^{S}}

\newcommand{\Tan}{\operatorname{Tan}}

\newcommand{\Fflat}{\mathcal F}

\title{A strict gap above the Brito--Chac\'on--Naveira bound\\for the Sasaki volume on higher odd spheres}
\author{Jonas Matuzas\\[-0.15em]\normalsize\href{mailto:jonas.matuzas@gmail.com}{jonas.matuzas@gmail.com}}
\date{}

\begin{document}
\maketitle

\begin{abstract}
For a smooth unit vector field $V$ on the round sphere $S^{2m+1}$, its Sasaki
volume is the volume of its graph in the unit tangent bundle.  Brito, Chac\'on
and Naveira proved that this volume is at least
$c_m\operatorname{vol}(S^{2m+1})$, where
$c_m=4^m/\binom{2m}{m}$.  We prove that, for every $m\ge2$, the infimum over
smooth unit fields is strictly larger than this value.

The proof constructs a global closed comass-one form on $T^1S^{2m+1}$ and
classifies all of its equality planes.  A hypothetical sequence approaching
the bound has a calibrated integral-current limit.  Its positive
horizontal-Jacobian part is a multiplicity-one graph, whereas its
projection-degenerate residual is confined to a base-rank-one equality plane.
Coordinate test forms show that this residual contributes no singular part to
the distributional derivative of the graph section.  The section is therefore
Sobolev and satisfies a scalar concircular equation; a weak rigidity theorem
identifies it with a radial distance-gradient field.  The radial graph has the
two pole fibers as boundary, while a local boundary--mass identity forces the
residual to vanish, contradicting the cycle condition.

We also prove a sharp current-level flat-boundary inequality for fillings of
the pole fibers.  Consequently, every sequence of smooth unit fields
converging in measure to a radial field has lower limiting volume at least
twice the Brito--Chac\'on--Naveira bound.  The strict gap obtained here is
qualitative; no explicit gap constant is claimed.
\end{abstract}

{\small\noindent\textbf{MSC 2020:} 53C38, 49Q15 (primary); 49J45, 53C20 (secondary).\\
\textbf{Keywords:} unit vector fields; Sasaki metric; Cartesian currents; calibrations; strict lower bound; odd spheres.\par}

\medskip
{\small\noindent\textbf{Generative-AI disclosure and author responsibility.}
This work was produced using OpenAI's ChatGPT 5.6 Pro.  The author directed
and audited the work throughout.  Jonas Matuzas takes full responsibility for
the mathematics and the final text.\par}

\tableofcontents

\section{Introduction and main results}\label{sec:intro}

Let $S^n\subset\R^{n+1}$ be the round unit sphere, with $n=2m+1$, and let
$E=T^1S^n$ carry the Sasaki metric.  A smooth unit field $V$ defines the
section $s_V(x)=(x,V(x))$ and the graph-volume functional
\[
\VolS(V)
=
\HH^n(s_V(S^n))
=
\int_{S^n}
\sqrt{\det\!\bigl(I+(\nabla V)^{\top}\nabla V\bigr)}\,d\vol.
\]
Gluck and Ziller proved that the Hopf fields minimize this functional on
$S^3$ \cite{GZ}; the corresponding uniqueness statement on positively curved
three-dimensional space forms was completed in \cite{GilM22}.  In every odd
dimension, Brito, Chac\'on and Naveira established
\begin{equation}\label{eq:bcn}
\VolS(V)\ge c_m\vol(S^{2m+1}),
\qquad
c_m=\frac{4^m}{\binom{2m}{m}}.
\end{equation}
Their pointwise equality model is the radial field
\[
R_p(x)=\frac{\cos r\,x-p}{\sin r},
\qquad r=\dist(x,p),
\]
which is singular at $p$ and $-p$.  They also proved that no smooth unit field
attains equality when $m\ge2$ \cite{BCN}.  Nonattainment, however, does not
exclude a sequence of smooth fields whose volumes converge to the bound.  The
question addressed here is whether such a sequence can exist.

\begin{theorem}[Strict gap above the BCN bound]\label{thm:main-intro}
For every $m\ge2$,
\[
\inf\left\{
\VolS(V):
V\in C^\infty(S^{2m+1};TS^{2m+1}),\ |V|\equiv1
\right\}
>
c_m\vol(S^{2m+1}).
\]
\end{theorem}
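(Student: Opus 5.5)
We argue by contradiction, following the calibration-plus-compactness strategy outlined in the abstract. Suppose $V_k$ are smooth unit fields with $\VolS(V_k)\to c_m\vol(S^{2m+1})$.

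\textbf{Step 1: calibration.} Construct a closed $n$-form $\Phi$ on $E=T^1S^{2m+1}$ with comass one. A natural candidate is an explicit combination of wedge powers of the horizontal and vertical coframes, built so that $\Phi$ restricted to any smooth section equals the Brito--Chac\'on--Naveira integrand lower bound. We then check two facts: that $\int_{s_V}\Phi=c_m\vol(S^{2m+1})$ for every smooth $V$, which follows from homology, since all smooth sections of $E$ are homologous; and that comass $\le1$ holds pointwise. We then classify the equality planes, namely the $n$-planes $\xi$ with $\Phi(\xi)=1$. These should split into \emph{graph-type} planes, whose horizontal projection is nondegenerate and whose slope is the concircular/radial model $\nabla V=\frac{\cos r}{\sin r}(I-V\otimes V)$, and \emph{degenerate} planes, whose projection to the base has rank one.

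\textbf{Step 2: compactness.} The currents $[\![s_{V_k}]\!]$ are integral $n$-cycles in a compact manifold with bounded mass. Federer--Fleming gives a subsequential limit $T$, an integral cycle homologous to a section, with $\M(T)\le\liminf\M=\int_T\Phi$. Hence $T$ is calibrated, so almost every approximate tangent plane is an equality plane. Using the Cartesian-current structure (Giaquinta--Modica--Sou\v{c}ek), we decompose $T=G_u+\Sigma$. Here $G_u$ is the multiplicity-one graph of a limit section $u$, of bounded variation type, carrying the positive-horizontal-Jacobian part. The residual $\Sigma$ has degenerate horizontal projection and, by the classification, is tangent to base-rank-one equality planes.

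\textbf{Step 3: regularity and rigidity.} This is the main obstacle. We test $\partial T=0$ against coordinate forms $\phi(x)\,dx^{\hat i}\wedge dy^j$. The boundary of $G_u$ computes the distributional derivative of $u$. The base-rank-one geometry of $\Sigma$ should make its pairing with these forms vanish, so $Du$ has no singular part and $u\in W^{1,1}$. Pointwise calibration then forces $u$ to satisfy $\nabla u=f\,(I-u\otimes u)$ weakly, with $u$ a gradient of a distance-like function. A weak rigidity argument, which upgrades regularity by bootstrapping the scalar equation $\nabla^2\rho=\cot\rho\,(g-d\rho^2)$, identifies $u=R_p$ for some $p$; here $m\ge2$ is used to exclude other solutions. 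Since $\partial G_{R_p}=\pm([\![F_p]\!]-[\![F_{-p}]\!])$, where $F_{\pm p}$ are the pole fibers, the residual $\Sigma$ must satisfy $\partial\Sigma=-\partial G_{R_p}\neq0$. Near each pole fiber, a local boundary--mass identity for $\Phi$-calibrated currents with base-rank-one tangents supported over $\pm p$ should force $\Sigma=0$. Both points of degeneracy (the vanishing pairing in the derivative computation and the boundary--mass identity at the poles) are where the specific form of $\Phi$ matters most, and I expect them to be the delicate part. With $\Sigma=0$, $T=G_{R_p}$ is not a cycle, which is the contradiction proving Theorem~\ref{thm:main-intro}.
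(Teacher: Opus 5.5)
Your outline follows the paper's proof essentially step for step: the closed comass-one form $\Omega=a\wedge\Theta$ with its two-branch equality cone; a calibrated integral-cycle limit split into a multiplicity-one graph and a residual tangent to $A\wedge C_1\wedge\cdots\wedge C_d$; invisibility of that rank-one residual to the coordinate tests, giving $U\in W^{1,1}$ with scalar block; Sobolev rigidity to $R_p$; and the identity $T_{\rm res}\llcorner(a\wedge c_i)=-(\partial T_{\rm gr})\llcorner b_i$, which comes from $db_i=a\wedge c_i-\sum_j\eta_{ij}\wedge b_j$ and kills the residual on all of $E$ (not only near the pole fibers) because the pole-fiber boundary is vertical. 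One correction: $m\ge2$ is spent in the equality-cone classification, since for $m=1$ the Hopf blocks are also calibrated, not in excluding non-radial solutions of the scalar equation; also, the paper derives $\lambda=\cot r$ from the weak Riccati equation $\nabla\lambda=-(1+\lambda^2)U$ rather than presupposing it and bootstrapping.
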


The proof is qualitative.  A compact relaxed class is used only to attain a
hypothetical equality sequence; the resulting equality current is shown not
to exist.  We deliberately call the conclusion a \emph{strict gap above the
BCN bound}, rather than a classical Lavrentiev gap: no second admissible
variational class attaining the BCN value is asserted.

A complementary theorem quantifies the radial basin.  The fiber orientations
used in its statement are fixed explicitly in Section~\ref{sec:prelim}.

\begin{theorem}[Flat-boundary floor and radial cost]\label{thm:floor-intro}
Let $m\ge1$, $E=T^1S^{2m+1}$, and let $\Sigma$ be a finite-mass
$(2m+1)$-current in $E$.  Put
\[
\beta=\Fflat\bigl(\partial\Sigma-(F_p+F_{-p})\bigr).
\]
Then
\[
\M(\Sigma)
\ge
\pi\vol(S^{2m})-\frac\pi2\,\beta.
\]
Consequently, if smooth unit fields $V_k$ converge in measure to $R_p$, then
\[
\liminf_{k\to\infty}\VolS(V_k)
\ge
2c_m\vol(S^{2m+1}).
\]
The parallel-transport cylinder realizes the finite-mass current floor; no
smooth recovery sequence at this value is asserted.
\end{theorem}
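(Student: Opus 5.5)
I would prove the current inequality with a single calibration-type form whose exterior derivative detects the pole fibers, and then apply it to graph currents. Let $\pi:E\to S^{2m+1}$ be the projection and $h=\langle x,p\rangle$, so that $r=\arccos h$. The fibers $F_{\pm p}=\pi^{-1}(\pm p)$ are $2m$-spheres, oriented as in Section~\ref{sec:prelim}. Consider the radial field $R_p$ on $S^{2m+1}\setminus\{\pm p\}$ and the $2m$-form $\omega$ obtained by pulling back the normalized fiber volume form through the map ``angle between $v$ and $R_p(x)$''. Concretely, write $v=\cos\theta\,R_p(x)+\sin\theta\,u$ with $u\perp R_p(x)$, and take
\[
\omega=f(\theta)\,\Theta ,
\]
where $\Theta$ is a suitable angular form. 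I would choose $\omega$ so that it has three properties:
\begin{itemize}
\item[(i)] $d\omega=0$ away from $F_{\pm p}$, while distributionally $d\omega$ equals $\vol(S^{2m})$ times the fiber classes with the correct signs;
\item[(ii)] the comass satisfies $\|\omega\|\le \pi/2$;
\item[(iii)] $\omega$ is actually defined on all of $E$, because the parallel-transport cylinder $C$ joining $F_p$ to $F_{-p}$ along meridians gives a primitive.
\end{itemize}
A cleaner route is to construct directly a $(2m+1)$-form. Let $\phi$ be the form with $\phi(\vec C)=1$ on the cylinder $C$, where $C$ is swept by parallel transport of a fiber along the meridians of length $\pi$. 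Then $\M(C)=\pi\vol(S^{2m})$, $\partial C=F_p+F_{-p}$ (up to the stated orientation), and $\phi=d\eta$ with $\|\phi\|\le 1$ and $\|\eta\|\le\pi/2$ in comass. These are the key estimates.

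Given such $\eta$, the inequality follows formally. For any finite-mass $\Sigma$, decompose $\partial\Sigma-(F_p+F_{-p})=Q+\partial P$ with $\M(Q)+\M(P)\le\beta+\varepsilon$. Then
\[
\M(\Sigma)\ge \Sigma(d\eta)=(\partial\Sigma)(\eta)=(F_p+F_{-p})(\eta)+Q(\eta)+P(d\eta).
\]
The last two terms are bounded below by $-\tfrac{\pi}{2}\M(Q)-\M(P)$. This is only at least $-\tfrac{\pi}{2}\beta$ if $\|d\eta\|\le\pi/2$ as well. I would therefore normalize so that both $\|\eta\|$ and $\|d\eta\|$ are at most $\pi/2$, which is possible by rescaling $\eta$ against its comass. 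The point of the construction is then the evaluation
\[
(F_p+F_{-p})(\eta)=\pi\vol(S^{2m}).
\]
Each fiber should contribute $\tfrac{\pi}{2}\vol(S^{2m})$. I would take $\eta=\tfrac{\pi}{2}\,\sigma\wedge$ (the fiber angular form), with $\sigma$ equal to $\pm1$ near $\pm p$ and interpolated via $\cos r$. The comass bound for $d\eta$ then comes from a pointwise linear-algebra computation in the Sasaki splitting into horizontal and vertical parts, using that $\sigma$ has gradient $\sin r\,dr$. This comass verification is where I expect the main obstacle. In particular, it requires checking that the mixed horizontal-vertical terms do not push the comass above the bound. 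I would first try to reduce the check to a $2\times2$ block in the $(dr,d\theta)$ plane.

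For the consequence, suppose $V_k\to R_p$ in measure with bounded volume; otherwise there is nothing to prove. By Federer--Fleming compactness, a subsequence of the graph currents $G_k=s_{V_k\#}[S^{2m+1}]$ converges to a Cartesian current $T$. Each $G_k$ has $\partial G_k=0$. The limit $T$ splits as the graph of $R_p$ plus a vertical residual $\Sigma$, and the graph of $R_p$ has boundary $\pm(F_p+F_{-p})$ because of the degree of $R_p$ near the poles. Hence $\partial\Sigma=F_p+F_{-p}$ up to sign, so $\beta=0$. Lower semicontinuity of mass gives
\[
\liminf_k \VolS(V_k)\ge \M(\gr R_p)+\M(\Sigma)\ge c_m\vol(S^{2m+1})+\pi\vol(S^{2m}).
\]
Finally, $\pi\vol(S^{2m})=c_m\vol(S^{2m+1})$. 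This follows from the identity $\vol(S^{2m+1})/\vol(S^{2m})=\pi\binom{2m}{m}/4^m$.
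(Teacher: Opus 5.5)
The overall plan matches the paper's. Take the fiber angular form to be the global closed comass-one form $\Theta$ of Definition~\ref{def:omega}, pair $\Sigma$ with $d\eta$ for $\eta=(\pi^*f)\,\Theta$, and split $\partial\Sigma-(F_p+F_{-p})=Q+\partial P$. For the consequence, use compactness, the graph--residual splitting of Theorem~\ref{thm:mcs} with graph part $T_p$, the boundary identity $\partial T_{\rm res}=F_p+F_{-p}$, and mass additivity.

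\textbf{The gap.} It sits in the one step that produces the constant $\pi$: the choice of $f$.
\begin{enumerate}
\item For any base function, $\|\pi^*df\wedge\Theta\|_*=\sup|\nabla f|$ exactly. The bound $\le$ is Lemma~\ref{lem:theta-comass}. Equality is attained on the unit plane $w^h\wedge C_1\wedge\cdots\wedge C_d$, where $w^h$ is the horizontal lift of $\nabla f/|\nabla f|$: there $\Theta(C_1\wedge\cdots\wedge C_d)=1$ and $df$ kills every $C_i$.
\item Also $(F_p+F_{-p})(\eta)=(f(p)-f(-p))\vol(S^{2m})$. This holds because $d\Theta=0$ and $\partial C_\gamma=-(F_p+F_{-p})$ give $F_{-p}(\Theta)=-F_p(\Theta)=-\vol(S^{2m})$.
\item Hence $\M(\Sigma)\ge\Sigma(d\eta)$ requires $\operatorname{Lip}(f)\le1$, while the target value $\pi\vol(S^{2m})$ requires $f(p)-f(-p)=\pi$.
\end{enumerate}

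Your $f=\frac\pi2\sigma$ fails this:
\begin{itemize}
\item With $\sigma=\pm1$ on neighborhoods of $\pm p$, $f$ must drop by $\pi$ over a distance less than $\pi$, so $\operatorname{Lip}(f)>1$.
\item With $\sigma=\cos r$ literally, $\operatorname{Lip}(f)=\pi/2$. After dividing by it you get only $\M(\Sigma)\ge 2\vol(S^{2m})-\beta$. The consequence then gives $(1+2/\pi)\,c_m\vol(S^{2m+1})$, not $2c_m\vol(S^{2m+1})$.
\end{itemize}

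Neither of your proposed repairs helps:
\begin{itemize}
\item A $2\times2$ block computation cannot rescue this. The obstruction is the Lipschitz constant of the base function, not mixed horizontal--vertical terms.
\item ``Rescaling $\eta$ against its comass'' multiplies $\|\eta\|_*$, $\|d\eta\|_*$ and $(F_p+F_{-p})(\eta)$ by the same factor, so it changes nothing. Allowing $\|d\eta\|_*=\pi/2$ also destroys $\M(\Sigma)\ge\Sigma(d\eta)$.
\item The worry that prompted the rescaling was unfounded. With $\|d\eta\|_*\le1\le\pi/2$ one already has $-\frac\pi2\M(Q)-\M(P)\ge-\frac\pi2(\M(Q)+\M(P))$.
\end{itemize}

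\textbf{The missing idea.} The two requirements force a unique $f$. A $1$-Lipschitz $f$ with $f(\pm p)=\pm\frac\pi2$ is squeezed between $f(p)-\dist(x,p)$ and $f(-p)+\dist(x,-p)$, both equal to $\frac\pi2-\dist(x,p)$. That function is not smooth at the poles. Since $\partial\Sigma$ need not have finite mass, $\partial\Sigma(\eta)=\Sigma(d\eta)$ and the flat decomposition are identities on smooth forms, so the sharp constant must be reached by approximation.

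The paper takes $f_q(x)=q^{-1}\arcsin(q\langle p,x\rangle)$. It is smooth, has $|\nabla f_q|\le1$, and satisfies $f_q(\pm p)=\pm b_q$ with $b_q=\arcsin(q)/q\in[1,\pi/2)$. This gives $\M(\Sigma)\ge 2b_q\vol(S^{2m})-b_q(\beta+\varepsilon)$, using $b_q\ge1$ on the $P$-term. Letting $q\uparrow1$ and $\varepsilon\downarrow0$ finishes.

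\textbf{The consequence.} With this replacement your bookkeeping and your second part go through, with three small fixes:
\begin{itemize}
\item Extract a subsequence realizing the $\liminf$ before applying compactness.
\item Note that the residual is only projection-degenerate, not vertical. This does not matter, since the floor needs only finite mass.
\item Identify the graph part with $T_p$ through the multiplicity-one slice of Theorem~\ref{thm:mcs} and the horizontal-Jacobian pairing. That is where convergence in measure actually enters.
\end{itemize}
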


\subsection*{Proof architecture}
The proof of Theorem~\ref{thm:main-intro} has five steps.

\begin{enumerate}[label=\textup{(\roman*)}]
\item A globally defined closed form $\Omega=a\wedge\Theta$ has comass one.
For $m\ge2$, its equality cone consists of scalar graph planes and one
projection-degenerate endpoint plane of base rank one.
\item A hypothetical equality sequence has a calibrated integral-current limit
in the section homology class.
\item The limit splits into a multiplicity-one graph part and a
projection-null residual, and calibration pins the two pieces to the two
branches of the equality cone.
\item The rank-one residual is invisible to the coordinate tests computing the
distributional derivative of the graph section.  The section is therefore
Sobolev and satisfies the scalar calibrated equation.
\item Sobolev rigidity forces one radial field.  Its graph has the two pole
fibers as boundary.  A structure-equation identity then forces the residual
to vanish, contradicting the fact that the limit is a cycle.
\end{enumerate}

Sections~\ref{sec:calibration}--\ref{sec:proof} contain the strict-gap proof,
and Section~\ref{sec:floor} proves the quantitative radial statement.

\subsection*{Correction of the former no-gap claim}
The former no-gap preprint, arXiv:2602.22961, claimed that the BCN value was
the smooth infimum.  The proposed recovery construction did not control the full
high-dimensional graph Jacobian in the anti-alignment region.  No part of
that construction is used below.  Section~\ref{app:error} records the
correction.

\section{Geometric setting and current conventions}\label{sec:prelim}

\subsection{Sasaki geometry of \texorpdfstring{$T^1S^n$}{the unit tangent bundle}}
Throughout, $n=2m+1$ and $d=2m=n-1$.  We identify
\[
E=T^1S^n
 =\{(x,v)\in\R^{n+1}\times\R^{n+1}:
 |x|=|v|=1,\ x\cdot v=0\},
\]
a Stiefel manifold of dimension $2n-1=4m+1$.  If
$Z=(X,Y)\in T_{(x,v)}E$ and $K$ denotes the connection map, then the Sasaki
metric is $|Z|_{\rm S}^2=|X|^2+|KZ|^2$; it is not the metric induced by the
Euclidean product on the ambient space.  The projection is $\pi(x,v)=x$, and
its fibers are the unit spheres $S^d\subset T_xS^n$.

At $y=(x,v)\in E$, choose an orthonormal basis $e_1,\dots,e_d$ of
$\{x,v\}^\perp\subset\R^{n+1}$.  The \emph{Sasaki--Stiefel frame} of $T_yE$ is
\[
A=(v,-x),\qquad B_i=(e_i,0),\qquad C_i=(0,e_i),\qquad i=1,\dots,d.
\]
It is orthonormal for the Sasaki metric.  The vector $A$ generates the geodesic
flow, the $B_i$ span the complementary horizontal (contact) directions, and
the $C_i$ span the vertical directions.  In particular,
\[
d\pi(A)=v,\qquad d\pi(B_i)=e_i,\qquad d\pi(C_i)=0.
\]
Let $(a,b_i,c_i)$ be the dual coframe.  Standard moving-frame computations on
the Stiefel manifold (see, for example, \cite{GZ}, or \cite{Blair} for Sasaki
geometry) give the structure equations
\begin{equation}\label{eq:structure}
da=-\sum_i b_i\wedge c_i,\qquad
db_i=a\wedge c_i-\sum_j\eta_{ij}\wedge b_j,\qquad
dc_i=-a\wedge b_i-\sum_j\eta_{ij}\wedge c_j,
\end{equation}
with skew connection forms $\eta_{ij}=-\eta_{ji}$. We write
\[
\mathcal V:=\mathrm{span}\{A,C_1,\dots,C_d\}=\bigcap_{i=1}^d\ker b_i,
\qquad \operatorname{rank}\mathcal V=n.
\]
From \eqref{eq:structure}, $db_i(A,C_j)=(a\wedge c_i)(A,C_j)=\delta_{ij}$, so $b_i([A,C_j])=-\delta_{ij}$ and
\begin{equation}\label{eq:bracket}
[A,C_j]\equiv -B_j \pmod{\mathcal V}:
\end{equation}
$\mathcal V$ is nowhere involutive. (We will not need any Frobenius-type theorem; \eqref{eq:bracket} enters only through the algebraic identity of Lemma~\ref{lem:identity}.)

\subsection{Currents; graphs; conventions}
We use Federer's conventions \cite{Fed,Simon} for integral currents
$\Icur_k(E)$, mass $\M$, boundary $\partial S(\psi)=S(d\psi)$, restriction
$(S\llcorner\alpha)(\varphi)=S(\alpha\wedge\varphi)$ for a form $\alpha$, and
slicing.  The flat norm is
\[
\Fflat(Q):=\inf\bigl\{\M(R)+\M(S):Q=R+\partial S\bigr\},
\]
where $R$ has the same dimension as $Q$ and $S$ has one higher dimension.
All quantitative flat norms and flat convergences below are intrinsic to
$(E,g_{\rm S})$.  For a $1$-form $\omega$,
\begin{equation}\label{eq:restr}
\partial(S\llcorner\omega)=S\llcorner d\omega-(\partial S)\llcorner\omega,
\end{equation}
valid distributionally with no finiteness assumption on $\M(\partial S)$.

For a smooth unit field $V$, $T_V:=(s_V)_\#\llbracket S^n\rrbracket\in\Icur_n(E)$ satisfies $\partial T_V=0$, $\pi_\#T_V=\llbracket S^n\rrbracket$, and $\M(T_V)=\VolS(V)$.

\emph{Radial fields and pole fibers.}
Fix $p\in S^n$ and orient the Euclidean space $p^\perp$.  Let
$S_p^d\subset p^\perp$ be its oriented unit sphere and define
\[
F_p=(u\mapsto(p,u))_\#\llbracket S_p^d\rrbracket,
\qquad
F_{-p}=(u\mapsto(-p,u))_\#\llbracket S_p^d\rrbracket.
\]
Thus both fibers are parametrized by the same oriented sphere in the fixed
ambient space $p^\perp$.  This is an \emph{ambient-fixed} convention, not the
orientation obtained by parallel transport.  Indeed, parallel transport from
$p$ to $-p$ along a minimizing geodesic acts on $p^\perp$ as a reflection and
therefore has degree $-1$ on $S_p^d$; its endpoint fiber current is
$-F_{-p}$.

The radial field
\[
R_p(x)=\frac{\cos r\,x-p}{\sin r},
\qquad r=\dist(x,p)\in(0,\pi),
\]
is smooth off $\{\pm p\}$, satisfies $\nabla_{R_p}R_p=0$ and
$\nabla_XR_p=\cot r\,X$ for $X\perp R_p$, and defines a finite-mass graph
current $T_p$ by cutoff away from the poles and passage to the limit.

\begin{proposition}\label{prop:radial}
One has
\[
\M(T_p)=\pi\vol(S^{2m})=c_m\vol(S^{2m+1})
\]
and, in the ambient-fixed convention,
\begin{equation}\label{eq:radialbdry}
\partial T_p=-(F_p+F_{-p}).
\end{equation}
\end{proposition}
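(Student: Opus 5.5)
The plan has two parts: a direct Jacobian computation for the mass, and a Stokes argument on the truncated graph for the boundary.

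\emph{Mass.} Work on $U_\varepsilon=\{\varepsilon<r<\pi-\varepsilon\}$, where $R_p$ is smooth. By the stated identities $\nabla_{R_p}R_p=0$ and $\nabla_XR_p=\cot r\,X$ for $X\perp R_p$, the matrix $(\nabla R_p)^\top\nabla R_p$ has eigenvalue $0$ once and $\cot^2 r$ with multiplicity $2m$. The area integrand is therefore
\[
\sqrt{\det\bigl(I+(\nabla R_p)^\top\nabla R_p\bigr)}=(1+\cot^2r)^m=\sin^{-2m}r .
\]
In geodesic polar coordinates about $p$ we have $d\vol=\sin^{2m}r\,dr\,d\sigma_{S^{2m}}$. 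The two factors cancel, so the mass of the truncated graph is $(\pi-2\varepsilon)\vol(S^{2m})$. Letting $\varepsilon\to0$ gives $\M(T_p)=\pi\vol(S^{2m})$; this also justifies that the cutoff limit has finite mass and converges in mass. The identity $\pi\vol(S^{2m})=c_m\vol(S^{2m+1})$ is the classical ratio
\[
\frac{\vol(S^{2m+1})}{\vol(S^{2m})}=\int_0^\pi\sin^{2m}t\,dt=\pi\binom{2m}{m}4^{-m}.
\]
This is Wallis' formula, and it is exactly $\pi/c_m$.

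\emph{Boundary.} Let $T_\varepsilon=(s_{R_p})_\#\llbracket U_\varepsilon\rrbracket$. Since $s_{R_p}$ is smooth there, $\partial T_\varepsilon=(s_{R_p})_\#\partial\llbracket U_\varepsilon\rrbracket$. The boundary $\partial U_\varepsilon$ consists of the two distance spheres $\{r=\varepsilon\}$ and $\{r=\pi-\varepsilon\}$, with orientations induced as boundary of $U_\varepsilon$ from the orientation of $S^n$.

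\emph{Limit of the boundary pieces.} Parametrize $x=\cos r\,p+\sin r\,u$ with $u\in S_p^d\subset p^\perp$. A direct substitution gives
\[
R_p(x)=\frac{\cos r(\cos r\,p+\sin r\,u)-p}{\sin r}=-\sin r\,p+\cos r\,u .
\]
Hence the image of $\{r=\varepsilon\}$ is the set of points $(\cos\varepsilon\,p+\sin\varepsilon\,u,\ -\sin\varepsilon\,p+\cos\varepsilon\,u)$. As $\varepsilon\to0$ these converge smoothly, as parametrized spheres, to $u\mapsto(p,u)$. At $r=\pi-\varepsilon$ the vector part tends to $-u$ and the base point to $-p$, so that sphere converges to $u\mapsto(-p,-u)$. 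The antipodal map has degree $(-1)^{d+1}=-1$ on the even-dimensional sphere $S^d$, so this limit equals $-$ (orientation factor) $F_{-p}$. Convergence is in flat norm, via the homotopy formula along $\varepsilon$, with homotopy mass $O(\varepsilon)$. Combined with mass convergence $T_\varepsilon\to T_p$, this gives $\partial T_p=\lim\partial T_\varepsilon$ as a sum of $\pm F_p$ and $\pm F_{-p}$.

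\emph{Main obstacle: the signs.} The hard part is bookkeeping the orientations, and I would pin them down with one consistent convention. The outward normal of $U_\varepsilon$ at $r=\varepsilon$ is $-\partial_r$, while at $r=\pi-\varepsilon$ it is $+\partial_r$. The parametrization $u\mapsto x$ compares with the orientation of $S^n$ via $(\partial_r,\text{sphere})$ at the north cap and via the image of the orientation of $p^\perp$ at the south cap. Fix the orientation of $S^n$ so that $p^\perp$ is oriented by an outward-first convention at $p$. The north piece then carries sign $-1$. At the south piece, the identification of the tangent sphere $\{r=\pi-\varepsilon\}\cong S_p^d$ reverses relative to the outward normal once; this reversal combines with the antipodal degree $-1$ and the outward normal $+\partial_r$ to give sign $-1$ as well. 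This produces $\partial T_p=-(F_p+F_{-p})$. As a consistency check, a parallel-transport convention would give opposite signs at the two poles, in line with the remark that transport yields $-F_{-p}$.
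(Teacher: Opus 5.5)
Your route is the paper's own: polar coordinates $x=\cos r\,p+\sin r\,u$, cancellation of the graph density $\sin^{-2m}r$ against $\sin^{2m}r\,dr\,d\sigma$, Wallis' formula for $\pi\vol(S^{2m})=c_m\vol(S^{2m+1})$, and Stokes on the truncated graph with traces $u\mapsto(p,u)$ and $u\mapsto(-p,-u)$. The mass computation, the flat and mass convergence of the truncations, and the north-pole sign $-F_p$ are all fine.

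The south-pole sign, which you identify as the crux, does not follow from what you wrote. You list three factors: a ``reversal once'' of the identification $\{r=\pi-\varepsilon\}\cong S_p^d$, the antipodal degree $-1$, and the outward normal $+\partial_r$. You then assert that they combine to $-1$. But $(-1)\cdot(-1)\cdot(+1)=+1$, which would give $+F_{-p}$ and the wrong formula.

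The extra reversal is spurious. Orient $p^\perp$ as $T_pS^n$; this normalization of $p^\perp$ is what the statement implicitly uses, and with the other orientation both $F_{\pm p}$ flip and one gets $+(F_p+F_{-p})$. With that choice, the polar map is an orientation-preserving diffeomorphism from the \emph{connected} manifold $(0,\pi)\times S_p^d$, oriented by $dr\wedge d\sigma$, onto $S^n\setminus\{\pm p\}$. Hence $dr\wedge d\sigma$ is the orientation of $S^n$ at the south cap too, and nothing needs correcting there.

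Then $\partial\llbracket[\varepsilon,\pi-\varepsilon]\times S_p^d\rrbracket=\llbracket\{\pi-\varepsilon\}\times S_p^d\rrbracket-\llbracket\{\varepsilon\}\times S_p^d\rrbracket$, so the $r=\pi-\varepsilon$ piece carries $+d\sigma$. The only sign there is the antipodal degree, which gives $-F_{-p}$; this is exactly the paper's bookkeeping. Whatever reversal you had in mind is already encoded in the outward normal $+\partial_r$ of $U_\varepsilon$. For example, the sphere oriented as the boundary of a small ball about $-p$ carries $-d\sigma$. Counting it a second time flips the answer.

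So your conclusion is correct, but the sign step needs to be replaced by this single-factor count.
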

\begin{proof}
Write
\[
x=\cos r\,p+\sin r\,u,
\qquad
u\in S_p^d.
\]
Then
\[
R_p(x)=-\sin r\,p+\cos r\,u.
\]
The graph density is $\sin^{-d}r$, while the base volume element is
$\sin^dr\,dr\,d\sigma(u)$; hence
$\M(T_p)=\pi\vol(S^d)$.  The identity with
$c_m\vol(S^{2m+1})$ follows from Lemma~\ref{lem:comb} and the standard sphere
volume recursion.

For the boundary, orient $(0,\pi)\times S_p^d$ by
$dr\wedge d\sigma$.  On $r=\varepsilon$ the boundary orientation is
$-d\sigma$ and the trace tends to $u\mapsto(p,u)$, contributing $-F_p$.
On $r=\pi-\varepsilon$ the boundary orientation is $+d\sigma$ and the trace
tends to $u\mapsto(-p,-u)$.  The antipodal map on $S^d$, with $d=2m$ even,
has degree $(-1)^{d+1}=-1$, so this contribution is $-F_{-p}$.  Passing to
the limit gives \eqref{eq:radialbdry}.
\end{proof}

\emph{The parallel cylinder.}
Let $\gamma:[0,\pi]\to S^n$ be a minimizing geodesic from $p$ to $-p$ and
$P_t$ parallel transport along $\gamma$.  Define the smooth map
\[
\mathcal C_\gamma(t,u)=(\gamma(t),P_tu)
\]
and the current
\[
C_\gamma=(\mathcal C_\gamma)_\#
\llbracket[0,\pi]\times S_p^d\rrbracket.
\]
Then
\[
\partial C_\gamma=-F_{-p}-F_p,
\qquad
\M(C_\gamma)=\pi\vol(S^d).
\]
Thus $\Sigma_\gamma:=-C_\gamma$ fills $F_p+F_{-p}$.  This cylinder is used
only for orientation and sharpness bookkeeping.

\section{The calibration and its equality cone}\label{sec:calibration}

\begin{lemma}[Globality of the invariant forms]\label{lem:globalforms}
At $(x,v)\in E$, the ordered pair $(x,v)$ and the ambient orientation of
$\R^{n+1}$ orient the $d$-plane $\{x,v\}^\perp$.  Consequently two oriented
adapted frames differ by a matrix $Q\in SO(d)$.  Under the simultaneous frame
change
\[
(B_i,C_i)\longmapsto
\left(\sum_jQ_{ij}B_j,\sum_jQ_{ij}C_j\right),
\]
one has
\[
\bigwedge_{i=1}^d(c_i+t b_i)
\longmapsto
\det(Q)\bigwedge_{i=1}^d(c_i+t b_i)
=
\bigwedge_{i=1}^d(c_i+t b_i).
\]
Hence every coefficient $\tau_k$ below, and therefore $\Theta$ and
$\Omega=a\wedge\Theta$, is globally defined and smooth.
\end{lemma}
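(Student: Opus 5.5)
The plan is to reduce everything to the transformation law of the dual coframe under a change of adapted frame, and then to the determinant of $Q$. First I fix the orientation of $\{x,v\}^\perp$. The ambient orientation of $\R^{n+1}$ induces one on the orthogonal complement of the oriented $2$-plane spanned by the ordered pair $(x,v)$: declare $(e_1,\dots,e_d)$ positive iff $(x,v,e_1,\dots,e_d)$ is a positive basis of $\R^{n+1}$. This depends continuously (indeed smoothly) on $(x,v)\in E$. Call a Sasaki--Stiefel frame \emph{adapted and oriented} if its $e_i$ form a positive orthonormal basis. Any two such frames at a point have $e'_i=\sum_j Q_{ij}e_j$ with $Q\in O(d)$ by orthonormality, and $\det Q=+1$ by the orientation condition. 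Since $A=(v,-x)$ does not involve the $e_i$, it and its dual form $a$ are unchanged. Because $B_i=(e_i,0)$ and $C_i=(0,e_i)$ are linear in $e_i$, the frame change is exactly the stated simultaneous rotation.

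Next I compute the effect on the coframe. Since the new frame is $B'_i=\sum_jQ_{ij}B_j$ and $C'_i=\sum_jQ_{ij}C_j$ with $Q$ orthogonal, the dual coframe transforms by the same matrix: $b'_i=\sum_jQ_{ij}b_j$ and $c'_i=\sum_jQ_{ij}c_j$. To check this, evaluate on $B'_k$ and use $QQ^{\top}=I$. Hence, for every real $t$, the $1$-forms $\xi_i:=c_i+tb_i$ transform as $\xi'_i=\sum_jQ_{ij}\xi_j$. The standard multilinear identity for wedges of $d$ linear combinations of $d$ one-forms then gives $\bigwedge_i\xi'_i=\det(Q)\bigwedge_i\xi_i$. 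With $\det Q=1$ this form is invariant.

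Now for the coefficients. Expanding $\bigwedge_{i=1}^d(c_i+tb_i)=\sum_{k=0}^d t^k\tau_k$ as a polynomial in $t$, each $\tau_k$ is the sum of all wedges with $k$ factors $b$ and $d-k$ factors $c$, in the ordered positions. Invariance for all $t$ forces invariance of each coefficient $\tau_k$, since a polynomial identity in $t$ holds coefficientwise. Therefore each $\tau_k$ is a well-defined form on $E$, independent of the choice of oriented adapted frame. Then $\Theta$, being (as defined below) a fixed constant-coefficient combination of the $\tau_k$, and $\Omega=a\wedge\Theta$ are well defined too.

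Smoothness is local. Near any point of $E$, I choose a smooth local oriented adapted frame. For example, I apply Gram--Schmidt to the projections onto $\{x,v\}^\perp$ of fixed ambient vectors, then flip $e_1$ if needed to fix the orientation. In such a frame $a,b_i,c_i$ are smooth, so the $\tau_k$ are smooth locally, and well-definedness glues them globally.

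The only genuinely delicate step is the orientation bookkeeping. One must check that the orientation of $\{x,v\}^\perp$ is globally consistent over $E$, which holds because it is defined pointwise from the ambient orientation, with no monodromy issue. Without it, $Q\in O(d)$ would only give invariance up to sign, and $\Theta$ could fail to be global.
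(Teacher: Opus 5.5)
Your proposal is correct and follows the same route as the paper. You orient $\{x,v\}^\perp$ via the ordered pair $(x,v)$ and the ambient orientation, so adapted frames differ by $Q\in SO(d)$ acting simultaneously on the $B$- and $C$-blocks, $\bigwedge_i(c_i+tb_i)$ picks up the factor $\det Q=1$, and each $\tau_k$ is invariant. Your extra checks — the dual coframe transforming by the same orthogonal $Q$, coefficientwise invariance in $t$, and local smoothness via Gram--Schmidt — fill in routine details the paper leaves implicit.
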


\begin{definition}\label{def:omega}
For $I\subset\{1,\dots,d\}$ let $\gamma^I_i=b_i$ if $i\in I$ and $\gamma^I_i=c_i$ if $i\notin I$, and set $\tau_k=\sum_{|I|=k}\gamma^I_1\wedge\cdots\wedge\gamma^I_d$. Define
\[
\Theta:=\sum_{j=0}^{m}C_{2j}\,\tau_{2j},\qquad
C_{2j}:=\frac{\binom mj}{\binom{2m}{2j}},\qquad
\Omega:=a\wedge\Theta.
\]
\end{definition}
At $(x,V(x))$, the tangent plane of a section graph is spanned by
\[
A+\sum_\beta b_\beta C_\beta,
\qquad
B_\alpha+\sum_\beta M_{\beta\alpha}C_\beta,
\]
where
$b_\beta=\langle\nabla_VV,e_\beta\rangle$ and
$M_{\beta\alpha}=\langle\nabla_{e_\alpha}V,e_\beta\rangle$.
If $\xi$ is its oriented unit tangent and $\rho$ its graph density, then
\[
\Omega(\xi)=\frac{\Phi_d(M)}{\rho},
\qquad
\Phi_d(M):=\sum_{j=0}^m C_{2j}\sigma_{2j}(M).
\]
Indeed, evaluating $\tau_{2j}$ produces the principal minors of order
$d-2j$.  The symmetry $C_{2j}=C_{d-2j}$ then gives the displayed indexing.  This is the
Brito--Chac\'on--Naveira integrand \cite{BCN}.

\begin{lemma}[Combinatorial identities]\label{lem:comb}
For all $m\ge1$: (i) $C_{2j}\binom{2m}{2j}=\binom mj$, hence $\Phi_d(\lambda I_d)=\sum_j\binom mj\lambda^{2j}=(1+\lambda^2)^m$; (ii) $\sum_{j=0}^m\binom mj^2/\binom{2m}{2j}=4^m/\binom{2m}{m}=c_m$; (iii) $C_{2j}$ equals the proportion of perfect matchings of $\{1,\dots,2m\}$ in which a fixed $2j$-element subset is matched internally.
\end{lemma}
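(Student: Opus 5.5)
Since (i) and (iii) are elementary double-factorial computations, I would prove (iii) first and then derive (ii) from the resulting closed form. Throughout, I would use $(2k)!=2^k\,k!\,(2k-1)!!$, with the convention $(-1)!!=1$.

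\emph{Step 1: part (i).} From Definition~\ref{def:omega}, $C_{2j}\binom{2m}{2j}=\binom mj$ holds by definition. For $M=\lambda I_d$, every principal minor of order $2j$ equals $\lambda^{2j}$, so $\sigma_{2j}(\lambda I_d)=\binom{2m}{2j}\lambda^{2j}$. Hence
\[
\Phi_d(\lambda I_d)=\sum_j C_{2j}\binom{2m}{2j}\lambda^{2j}=\sum_j\binom mj\lambda^{2j}=(1+\lambda^2)^m
\]
by the binomial theorem.

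\emph{Step 2: part (iii).} The set $\{1,\dots,2m\}$ has $(2m-1)!!$ perfect matchings. Fix a $2j$-element subset $J$. A matching in which $J$ is matched internally is the same as a perfect matching of $J$ together with a perfect matching of its complement, so there are $(2j-1)!!\,(2m-2j-1)!!$ of them. Rewriting the binomial coefficients with the factorial identity gives
\[
\binom{2m}{2j}=\frac{(2m)!}{(2j)!\,(2m-2j)!}=\binom mj\,\frac{(2m-1)!!}{(2j-1)!!\,(2m-2j-1)!!}.
\]
Therefore the proportion of such matchings is
\[
\frac{(2j-1)!!\,(2m-2j-1)!!}{(2m-1)!!}=\frac{\binom mj}{\binom{2m}{2j}}=C_{2j}.
\]

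\emph{Step 3: part (ii).} By Step 2,
\[
\sum_j\frac{\binom mj^2}{\binom{2m}{2j}}=\frac{1}{(2m-1)!!}\sum_j\binom mj(2j-1)!!\,(2m-2j-1)!!.
\]
Next use $(2k-1)!!=2^k k!\,\binom{2k}{k}4^{-k}$ for $k=j$ and $k=m-j$. The product $\binom mj\,j!\,(m-j)!$ equals $m!$, and the powers of two collect to $2^m 4^{-m}$. So each summand becomes
\[
m!\,2^m\,4^{-m}\binom{2j}{j}\binom{2m-2j}{m-j}.
\]
Now invoke the classical convolution
\[
\sum_{j=0}^m\binom{2j}{j}\binom{2m-2j}{m-j}=4^m,
\]
which is the $x^m$-coefficient of $\bigl((1-4x)^{-1/2}\bigr)^2=(1-4x)^{-1}$. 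The sum therefore equals
\[
\frac{m!\,2^m}{(2m-1)!!}=\frac{m!\,2^m\cdot 2^m m!}{(2m)!}=\frac{4^m}{\binom{2m}{m}}=c_m.
\]

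\emph{Main obstacle.} The only nontrivial input is the central-binomial convolution in Step 3. I would cite it via the generating function $\sum_k\binom{2k}{k}x^k=(1-4x)^{-1/2}$ rather than prove it combinatorially. The rest is bookkeeping of double factorials.
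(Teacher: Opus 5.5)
Your proof is correct. Parts (i) and (iii) follow the paper: (i) comes from the definition of $C_{2j}$, and (iii) is the matching count $(2j-1)!!\,(2m-2j-1)!!/(2m-1)!!$. For (ii), however, you take a genuinely different route.

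The paper never uses double factorials in (ii). It first proves the beta-integral representation \eqref{eq:beta},
\[
C_{2j}=I_0^{-1}\int_{-\pi/2}^{\pi/2}\cos^{2j}\varphi\,\sin^{2m-2j}\varphi\,d\varphi ,
\]
with $I_0$ the Wallis integral. The binomial theorem then collapses $\sum_j\binom mj\cos^{2j}\varphi\,\sin^{2m-2j}\varphi=1$, so the sum equals $\pi/I_0=4^m/\binom{2m}{m}$.

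You instead feed the closed form from (iii) into the central-binomial convolution $\sum_j\binom{2j}{j}\binom{2m-2j}{m-j}=4^m$. Your bookkeeping checks out: $(2k-1)!!=k!\binom{2k}{k}2^{-k}$, each summand carries the factor $m!\,2^{-m}$, and the final value is $m!\,2^m/(2m-1)!!=4^m/\binom{2m}{m}$.

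The two arguments are really the same identity in different form. The beta integral equals $\pi(2j-1)!!\,(2m-2j-1)!!/(2^m m!)$, so integrating $(\cos^2\varphi+\sin^2\varphi)^m$ is an integral proof of your convolution.

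What the paper's route buys is the representation \eqref{eq:beta} itself, which is used later. It is exactly what produces the rotation-integral identity \eqref{eq:rotint} in the comass and equality-cone proof of Theorem~\ref{thm:comass}. Your route is more self-contained and ties (ii) directly to (iii). But if it replaced the paper's argument, \eqref{eq:beta} would still have to be proved separately for Section~\ref{sec:calibration}.
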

\begin{proof}
(i) is immediate from the definition of $C_{2j}$. The key is the beta-integral representation
\begin{equation}\label{eq:beta}
C_{2j}=\frac1{I_0}\int_{-\pi/2}^{\pi/2}\cos^{2j}\varphi\,\sin^{d-2j}\varphi\,d\varphi,
\qquad I_0:=\int_{-\pi/2}^{\pi/2}\sin^{d}\varphi\,d\varphi=\pi\binom{2m}{m}4^{-m},
\end{equation}
where the value of $I_0$ is the Wallis integral for $d=2m$. Indeed, by the beta function,
\[
\int_{-\pi/2}^{\pi/2}\cos^{2j}\varphi\,\sin^{2m-2j}\varphi\,d\varphi
=\frac{\Gamma(j+\frac12)\,\Gamma(m-j+\frac12)}{\Gamma(m+1)}
=\pi\,\frac{(2j)!\,(2m-2j)!}{4^m\,j!\,(m-j)!\,m!},
\]
and dividing by $I_0$ gives $\binom mj\frac{(2j)!(2m-2j)!}{(2m)!}=\binom mj\big/\binom{2m}{2j}=C_{2j}$. For (ii), sum \eqref{eq:beta} against $\binom mj$ and use the binomial theorem:
\[
\sum_{j=0}^m\frac{\binom mj^2}{\binom{2m}{2j}}=\sum_j\binom mjC_{2j}
=\frac1{I_0}\int_{-\pi/2}^{\pi/2}\big(\cos^2\varphi+\sin^2\varphi\big)^{m}d\varphi
=\frac{\pi}{I_0}=\frac{4^m}{\binom{2m}{m}}.
\]
(iii): perfect matchings of $2m$ labels number $(2m)!/(2^mm!)$; those matching a fixed $2j$-element set internally number $\frac{(2j)!}{2^jj!}\cdot\frac{(2m-2j)!}{2^{m-j}(m-j)!}$; the ratio simplifies to $\binom mj/\binom{2m}{2j}$.
\end{proof}

\begin{theorem}[Closedness]\label{thm:closed}
$d\Theta=0$ and $d\Omega=0$.
\end{theorem}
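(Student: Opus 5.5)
The plan is to write $\Theta$ as an average of decomposable forms and to differentiate under the integral sign. The beta-integral representation \eqref{eq:beta} of Lemma~\ref{lem:comb} is the key input. For $\varphi\in[-\pi/2,\pi/2]$ set
\[
\theta_i(\varphi):=\sin\varphi\,c_i+\cos\varphi\,b_i,
\qquad
\omega_\varphi:=\theta_1(\varphi)\wedge\cdots\wedge\theta_d(\varphi).
\]
Expanding the product gives $\omega_\varphi=\sum_k\cos^k\varphi\,\sin^{d-k}\varphi\,\tau_k$. When $k$ is odd, $d-k$ is odd because $d=2m$ is even. The integrand $\cos^k\varphi\sin^{d-k}\varphi$ is then odd on the symmetric interval, so those terms integrate to zero. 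Hence \eqref{eq:beta} gives
\[
\Theta=\frac1{I_0}\int_{-\pi/2}^{\pi/2}\omega_\varphi\,d\varphi .
\]
By Lemma~\ref{lem:globalforms} with $t=\cot\varphi$, each $\omega_\varphi$ is a global smooth form, since up to a positive scalar it is an $SO(d)$-invariant wedge.

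Next I compute $d\omega_\varphi$ from the structure equations \eqref{eq:structure}:
\[
d\theta_i=\sin\varphi(-a\wedge b_i)+\cos\varphi(a\wedge c_i)-\sum_j\eta_{ij}\wedge\theta_j
= a\wedge\partial_\varphi\theta_i-\sum_j\eta_{ij}\wedge\theta_j .
\]
In the Leibniz expansion of $d\omega_\varphi$, the connection contributions combine into $-\sum_i\eta_{ii}\wedge\omega_\varphi$. To see this, note that replacing $\theta_i$ by $\theta_j$ with $j\ne i$ repeats a factor and gives zero, so only the diagonal terms survive. These vanish because $\eta$ is skew. For the $a$-terms, moving the $1$-form $a$ to the front introduces the same sign as the Leibniz rule. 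Therefore
\[
d\omega_\varphi=a\wedge\partial_\varphi\omega_\varphi .
\]
The main care is needed in this sign bookkeeping, and I would check it on one factor explicitly.

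Integrating in $\varphi$ then gives
\[
d\Theta=\frac1{I_0}\,a\wedge\bigl(\omega_{\pi/2}-\omega_{-\pi/2}\bigr)
=\frac1{I_0}\,a\wedge\Bigl(\bigwedge_i c_i-\bigwedge_i(-c_i)\Bigr)=0,
\]
since $(-1)^d=1$. This is the only place where the evenness of $d$ and the precise weights $C_{2j}$ enter. A naive termwise computation of $d\tau_k$ produces the recursion $-a\wedge\bigl((k+1)\tau_{k+1}+\dots\bigr)$, which hides this cancellation. That is why I route the argument through the $\varphi$-average.

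Finally,
\[
d\Omega=da\wedge\Theta-a\wedge d\Theta=da\wedge\Theta=-\sum_i b_i\wedge c_i\wedge\Theta .
\]
For each $i$ and each $\varphi$, the factor $\theta_i(\varphi)$ of $\omega_\varphi$ lies in $\mathrm{span}\{b_i,c_i\}$. Hence $b_i\wedge c_i\wedge\omega_\varphi=0$ pointwise in $\varphi$. Integrating in $\varphi$ gives $da\wedge\Theta=0$, and so $d\Omega=0$.
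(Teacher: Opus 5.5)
Your proof is correct, and it obtains the cancellation by a different mechanism than the paper. The connection-term cancellation and the argument for $da\wedge\Theta=0$ are the same in both. The paper differentiates the generating polynomial $\mathscr P(t)=\bigwedge_i(c_i+tb_i)$ and reads off $d\tau_k=a\wedge\bigl((d-k+1)\tau_{k-1}-(k+1)\tau_{k+1}\bigr)$. It then kills every odd coefficient of $d\Theta$ with the explicit ratio $C_{2j+2}/C_{2j}=(2j+1)/(2m-2j-1)$. You instead use \eqref{eq:beta} to write $\Theta$ as the $\varphi$-average of the decomposable forms $\omega_\varphi$, which is the device behind \eqref{eq:rotint} in the comass proof. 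The identity $d\omega_\varphi=a\wedge\partial_\varphi\omega_\varphi$ then reduces $d\Theta$ to a boundary term, which vanishes because $\omega_{\varphi+\pi}=(-1)^d\omega_\varphi$.

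The two computations are the same identity in different coordinates. Since $\omega_\varphi=\sin^{2m}\varphi\,\mathscr P(\cot\varphi)$, the paper's formula $d\mathscr P(t)=a\wedge\bigl(2m\,t\,\mathscr P(t)-(1+t^2)\mathscr P'(t)\bigr)$ is exactly your $\partial_\varphi$-identity. The paper's coefficient ratio is the integration-by-parts recursion $(k+1)\int\cos^k\varphi\sin^{2m-k}\varphi\,d\varphi=(2m-k-1)\int\cos^{k+2}\varphi\sin^{2m-k-2}\varphi\,d\varphi$ for the beta moments.

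What each route buys:
\begin{itemize}
\item Yours explains why the BCN weights close $\Theta$. It also lets one representation serve both closedness and the comass bound, with no coefficient arithmetic.
\item The paper's uses only the definition of $C_{2j}$ and yields $d\tau_k$ for every $k$; the subsequent remark uses $d\tau_0=-a\wedge\tau_1$. It also shows that, within the ansatz $\sum_j C_{2j}\tau_{2j}$, closedness determines the weights up to scale. So the termwise recursion does not really hide the cancellation, as you suggest.
\end{itemize}

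One small imprecision: the ``positive scalar'' in your globality remark vanishes at $\varphi=0$, where $\omega_0=\tau_d$. This is harmless. $\omega_\varphi=\sum_k\cos^k\varphi\sin^{d-k}\varphi\,\tau_k$ is a finite combination of the global forms $\tau_k$ with scalar coefficients, which also justifies commuting $d$ with the $\varphi$-integral. In any case, closedness is a local computation in any adapted frame.
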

\begin{proof}
Let $\mathscr P(t)=\bigwedge_{i=1}^d(c_i+tb_i)=\sum_k t^k\tau_k$. By \eqref{eq:structure}, $d(c_i+tb_i)=a\wedge(tc_i-b_i)-\sum_j\eta_{ij}\wedge(c_j+tb_j)$. In $d\mathscr P(t)$ the connection terms cancel: the $\eta_{ij}$-term of factor $i$ produces $\pm\eta_{ij}\wedge(c_j+tb_j)\wedge\prod_{k\ne i}(c_k+tb_k)$, which vanishes when $j\ne i$ by duplication of the factor $(c_j+tb_j)$, and $\eta_{ii}=0$. Hence
\[
d\tau_k=a\wedge\big((d-k+1)\tau_{k-1}-(k+1)\tau_{k+1}\big),
\]
so $d\Theta=a\wedge\sum_{j=0}^{m-1}\big[(2m-2j-1)\,C_{2j+2}-(2j+1)\,C_{2j}\big]\tau_{2j+1}$ (only odd indices survive since $\Theta$ has even ones). Every bracket vanishes by the coefficient identity
\[
\frac{C_{2j+2}}{C_{2j}}
=\frac{\binom{m}{j+1}}{\binom mj}\cdot\frac{\binom{2m}{2j}}{\binom{2m}{2j+2}}
=\frac{m-j}{j+1}\cdot\frac{(2j+1)(2j+2)}{(2m-2j)(2m-2j-1)}
=\frac{2j+1}{2m-2j-1},
\]
so $d\Theta=0$. Moreover every monomial of $\Theta$ contains, for each index $i$, one of $b_i,c_i$; hence $(b_i\wedge c_i)\wedge\Theta=0$ for every $i$ and $da\wedge\Theta=-\sum_i b_i\wedge c_i\wedge\Theta=0$. Therefore $d\Omega=da\wedge\Theta-a\wedge d\Theta=0$: both terms vanish separately.
\end{proof}
\begin{remark}
That $\Theta$ itself is closed --- not merely $a\wedge d\Theta=0$ --- is a genuinely stronger statement: the horizontal correction terms $\tau_{2j}$, $j\ge1$, exactly close the non-closed vertical form $\tau_0=c_1\wedge\cdots\wedge c_d$ (for which $d\tau_0=-a\wedge\tau_1\ne0$). Closed forms of comass one in the fiber direction are the natural dual objects for lower bounds on $\pi$-degenerate currents.  The form $\Theta$ will be used in Section~\ref{sec:floor} to price pole-fiber boundaries.
\end{remark}

\begin{theorem}[Full comass and the equality cone]\label{thm:comass}
For every $m\ge1$, $\|\Omega\|_*=1$: for every oriented unit simple
$n$-vector $\xi$ of $T_yE$, $|\Omega(\xi)|\le1$.  If $m\ge2$, then
$\Omega(\xi)=1$ holds exactly for
\[
\xi=A\wedge\eta_\lambda\quad(\lambda\in\R),
\qquad
\eta_\lambda=(1+\lambda^2)^{-m}
\bigwedge_{i=1}^d(B_i+\lambda C_i),
\]
up to an oriented simultaneous rotation of the $B$- and $C$-frames, or for
\[
\xi=A\wedge C_1\wedge\cdots\wedge C_d.
\]
In particular, for $m\ge2$ the only calibrated plane with $J_n\pi=0$ is the
last one, whose base projection has rank one.
\end{theorem}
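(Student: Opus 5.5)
The strategy is to reduce the comass of $\Omega$ to the comass of $\Theta$ on the contact distribution, and then to use the beta-integral representation \eqref{eq:beta} to write $\Theta$ as an average of unit simple forms, where a H\"older inequality gives both the bound and the equality cases.

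\emph{Step 1: splitting off $a$.} Let $W=\operatorname{span}\{B_i,C_i\}=\ker a$. Since $a$ is a unit covector Sasaki-orthogonal to $W^*$ and $\Theta$ involves only the $b_i,c_i$, any unit simple $n$-vector can be written $\xi=\cos\theta\,A\wedge\zeta+\sin\theta\,\zeta'$ with $\zeta\in\Lambda^dW$ and $\zeta'\in\Lambda^nW$ unit simple. Then
\[
\Omega(\xi)=\cos\theta\,\Theta(\zeta).
\]
So $\|\Omega\|_*=\|\Theta|_W\|_*$, and equality forces $\xi=A\wedge\zeta$ with $\Theta(\zeta)=1$. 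This reduces everything to $d$-planes in $W$.

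\emph{Step 2: $\Theta$ as an average of simple forms.} For $\varphi\in(-\pi/2,\pi/2)$ set
\[
\omega_\varphi=\bigwedge_i(\cos\varphi\,b_i+\sin\varphi\,c_i)=\sum_k\cos^k\varphi\,\sin^{d-k}\varphi\,\tau_k .
\]
Because $d$ is even, the odd-$k$ terms integrate to zero over the symmetric interval, so by \eqref{eq:beta}
\[
\Theta=\frac1{I_0}\int_{-\pi/2}^{\pi/2}\omega_\varphi\,d\varphi .
\]
The naive bound $|\omega_\varphi(\zeta)|\le1$ only gives $\pi/I_0=c_m$, so correlations in $\varphi$ must be exploited. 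Take first a graph plane $\zeta=\rho^{-1}\bigwedge_\alpha(B_\alpha+\sum_\beta M_{\beta\alpha}C_\beta)$ with $\rho=\sqrt{\det(I+M^\top M)}$. Then $\omega_\varphi(\zeta)=\rho^{-1}\det(\cos\varphi\,I+\sin\varphi\,M)$, and what must be shown is
\[
\frac1{I_0}\int_{-\pi/2}^{\pi/2}\det(\cos\varphi\,I+\sin\varphi\,M)\,d\varphi\le\sqrt{\det(I+M^\top M)}.
\]
\begin{itemize}
\item \emph{Symmetric $M$.} Diagonalize by a simultaneous $SO(d)$ rotation, which is allowed by Lemma~\ref{lem:globalforms}. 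Each eigenvalue factor becomes $\sqrt{1+\lambda_i^2}\,\sin(\varphi+\theta_i)$ with $\cot\theta_i=\lambda_i$. H\"older with $d$ equal exponents over a full period of length $\pi$ gives
\[
\int\prod_i|\sin(\varphi+\theta_i)|\,d\varphi\le\prod_i\Bigl(\int|\sin(\varphi+\theta_i)|^d\,d\varphi\Bigr)^{1/d}=I_0 .
\]
Equality in H\"older forces all $|\sin(\varphi+\theta_i)|$ to be proportional, hence all $\theta_i$ equal mod $\pi$. That means $M=\lambda I$, and sign-positivity of the integrand then gives exactly $\eta_\lambda$.
\item \emph{General $M$.} Use the polar/singular-value decomposition $M=UDV^\top$. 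I would try to show that replacing $M$ by a symmetric matrix with the same singular values does not decrease the left-hand side, while the right-hand side depends only on the singular values. One route is to expand in principal minors as in Lemma~\ref{lem:comb}(iii): write $\Phi_d(M)$ as an average over perfect matchings, and bound each sum of block determinants by the corresponding Cauchy--Binet terms. The alternative is a direct Hadamard/H\"older estimate of $|\det(\cos\varphi I+\sin\varphi M)|$ by products of column norms $\sqrt{\cos^2\varphi+2\cos\varphi\sin\varphi M_{ii}+\sin^2\varphi|Me_i|^2}$, followed by the same H\"older step. Equality then forces the skew part to vanish.
\end{itemize}
This non-symmetric reduction, together with its equality case, is the step I expect to be the main obstacle. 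In particular one must check that nonzero skew parts, which appear harmlessly in $\sigma_2$, cannot raise $\Theta$ above the bound.

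\emph{Step 3: non-graph planes.} Planes $\zeta$ whose projection to $\operatorname{span}\{B_i\}$ is degenerate are limits of graph planes, so $|\Theta(\zeta)|\le1$ follows by continuity. For equality at such $\zeta$, apply the same H\"older argument directly to $\omega_\varphi(\zeta)$, using a principal-angle (Jordan) normal form between $\zeta$ and the $B$-plane and absorbing the mixed $B$/$C$ pairing by a simultaneous rotation; this form allows angles $\pi/2$. Equality again forces all principal angles to be equal. With common angle $\pi/2$ this gives $\zeta=C_1\wedge\cdots\wedge C_d$, the $\lambda\to\infty$ endpoint of the family $\eta_\lambda$.

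The final assertion follows from the classification. The planes $A\wedge\eta_\lambda$ project isomorphically onto the base, so $J_n\pi\neq0$. The plane $A\wedge C_1\wedge\cdots\wedge C_d$ satisfies $d\pi(A)=v$ and $d\pi(C_i)=0$, so its base projection has rank exactly one.
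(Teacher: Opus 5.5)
Your Step~1 and the symmetric case of Step~2 are correct. The gap is that the proposal stops where the theorem's content lies.

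\textbf{Why the non-symmetric case is essential.} Your symmetric H\"older argument, equality case included, works verbatim for $m=1$. There the classification is false: for $m=1$ the non-symmetric conformal blocks $M=\lambda I+\mu J$ satisfy $\Phi_2(M)=1+\lambda^2+\mu^2=\rho$. So the hypothesis $m\ge2$ can only enter through the non-symmetric case, which you leave open. The claim ``equality then forces the skew part to vanish'' is asserted, not shown.

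\textbf{The concrete route you suggest fails.} Hadamard on the columns $\cos\varphi\,e_i+\sin\varphi\,Me_i$ of the standard graph basis is lossy, because that basis of the plane is not orthogonal. Take $m=2$ and $M=\operatorname{diag}(N,N)$ with $N=\bigl(\begin{smallmatrix}1&1\\0&1\end{smallmatrix}\bigr)$. Then $\rho=5$ and $\Phi_4(M)=4$, so $\Theta=4/5$. However, $\int_{-\pi/2}^{\pi/2}\prod_i\|\cos\varphi\,e_i+\sin\varphi\,Me_i\|\,d\varphi=2\pi>I_0\rho=15\pi/8$. So this route gives at best $\Theta\le16/15$, even before any H\"older step.

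\textbf{Step~3 has the same defect.} In the principal-angle normal form, the orthonormal frames $(u_j)$ on the $B$-side and $(w_j)$ on the $C$-side differ by an orthogonal matrix (for graph planes, the polar factor of $M$). A simultaneous rotation of the $B$- and $C$-frames removes only one of them, so the determinant does not factor into sines. Your continuity argument is fine for the inequality on non-graph planes, but not for their equality cases.

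\textbf{The missing idea (the paper's argument).} Apply Hadamard to an orthonormal basis $f_i=(b_i,c_i)$ of the plane, rather than seeking a factorization; your principal-angle basis is such a basis.
\begin{itemize}
\item Then $|b_i|^2+|c_i|^2=1$, and $\|\cos\varphi\,c_i+\sin\varphi\,b_i\|^2=\tfrac12+\tfrac12R_i\cos(2\varphi-\delta_i)$ with $R_i\le1$ by Cauchy--Schwarz.
\item After AM--GM (or H\"older), each row contributes the moment $G(R_i)=\int_{-\pi/2}^{\pi/2}\bigl(\tfrac12+\tfrac12R_i\cos(2\varphi-\delta_i)\bigr)^m\,d\varphi\le G(1)=I_0$.
\item This bounds $|\Theta|$ on every $d$-plane at once, graph or not.
\item For $m\ge2$ the $R^2$-coefficient of $G$ is positive. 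Equality therefore forces $R_i=1$, i.e.\ $b_i\parallel c_i$ for every row. This is what eliminates the rotational part, and it is exactly where $m\ge2$ is used; for $m=1$, $G$ is constant.
\item The equality cases of AM--GM, Hadamard and the sign condition then leave only $\eta_\lambda$ and $C_1\wedge\cdots\wedge C_d$.
\end{itemize}

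\textbf{An alternative completion on graph planes.} Your first route can also be finished there. Von Neumann's trace inequality applied to $\Lambda^kM$ gives $|\sigma_k(M)|=|\operatorname{tr}\Lambda^kM|\le e_k(s_1,\dots,s_d)$, where $e_k$ is the elementary symmetric function of the singular values. Hence $|\Phi_d(M)|\le\Phi_d(\operatorname{diag}(s))$, while $\rho$ is unchanged, which reduces everything to your diagonal case. For $m\ge2$, equality then forces $M=sW$ with $W$ orthogonal and $\operatorname{tr}\Lambda^2W=\binom d2$, hence $W=\pm I$. Non-graph planes would still need a separate equality argument on this route.
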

\begin{proof}
Since $\Omega=a\wedge\Theta$ and $|a(\xi_1)|\le1$, it suffices to prove $|\Theta(P)|\le1$ for oriented unit $d$-planes $P$ in the contact hyperplane $\mathscr D=\ker a=\mathrm{span}\{B_i,C_i\}$, with equality classification; a general $\xi$ then satisfies $|\Omega(\xi)|\le\|\iota_A\xi\|\le1$ with equality forcing $A\in\xi$ and the contact factor calibrated.

Choose an orthonormal basis $f_1,\dots,f_d$ of $P$ and write $f_i=b_i^H+c_i^V$ with $b_i,c_i\in\R^d$ (identifying the $B$- and $C$-blocks with $\R^d$), so $|b_i|^2+|c_i|^2=1$. Let $\mathbf B,\mathbf C$ be the $d\times d$ matrices with rows $b_i,c_i$. Evaluating the coframe monomials, $\tau_k(P)=\sum_{|I|=k}\det M_I$, where $M_I$ has column $j$ taken from $\mathbf B$ if $j\in I$ and from $\mathbf C$ otherwise. Expanding the determinant below by columns and integrating (odd powers of $\sin$ vanish by parity; even powers produce the coefficients \eqref{eq:beta}, using $C_{2j}=C_{2(m-j)}$) yields the \emph{rotation-integral identity}
\begin{equation}\label{eq:rotint}
\Theta(P)=\frac1{I_0}\int_{-\pi/2}^{\pi/2}\det\big(\cos\varphi\,\mathbf C+\sin\varphi\,\mathbf B\big)\,d\varphi.
\end{equation}
Bound the determinant by Hadamard's inequality over rows, $|\det|\le\prod_i\|\cos\varphi\,c_i+\sin\varphi\,b_i\|$, and compute each row norm exactly:
\begin{equation}\label{eq:rownorm}
\|\cos\varphi\,c_i+\sin\varphi\,b_i\|^2
=\tfrac12+\tfrac12 R_i\cos(2\varphi-\delta_i),
\qquad
R_i:=\sqrt{\big(|c_i|^2-|b_i|^2\big)^2+4\langle b_i,c_i\rangle^2}\ \le\ 1,
\end{equation}
where the bound $R_i\le1$ is Cauchy--Schwarz: $R_i^2\le(|c_i|^2-|b_i|^2)^2+4|b_i|^2|c_i|^2=(|b_i|^2+|c_i|^2)^2=1$. By AM--GM on $d$-th powers, $\prod_i\|\text{row}_i\|\le\frac1d\sum_i\|\text{row}_i\|^{d}$, and each summand integrates to the \emph{moment}
\[
G(R):=\int_{-\pi/2}^{\pi/2}\Big(\tfrac12+\tfrac12R\cos(2\varphi-\delta)\Big)^{m}d\varphi
=\sum_{k=0}^{\lfloor m/2\rfloor}\binom m{2k}2^{-m}\,R^{2k}\int_{-\pi/2}^{\pi/2}\cos^{2k}\psi\,d\psi ,
\]
independent of the phase $\delta$ ($\psi=2\varphi-\delta$ runs over a full period of the integrand) and \emph{nondecreasing in $R\in[0,1]$}, with
\[
G(1)=\tfrac12\int_0^{2\pi}\cos^{2m}(\psi/2)\,d\psi=\int_0^{\pi}\cos^{2m}u\,du=I_0.
\]
Hence $|\Theta(P)|\le\frac1{I_0}\cdot\frac1d\sum_iG(R_i)\le\frac1{I_0}G(1)=1$, proving the comass bound.

\emph{Equality, $m\ge2$.}
For $m\ge2$, the $R^2$-coefficient of $G$ is positive, so $G$ is strictly
increasing on $[0,1]$.  Equality in the preceding chain therefore forces,
for every $i$ and for a.e.\ $\varphi$,
\begin{enumerate}[label=\textup{(E\arabic*)},leftmargin=3.2em]
\item $R_i=1$.  Equivalently, $b_i$ and $c_i$ are parallel, with the cases in
which one vanishes included.
\item Equality holds in AM--GM\@.  Thus the row norms agree, and
\eqref{eq:rownorm} gives $\delta_i\equiv\delta\pmod{2\pi}$.
\item Equality holds in Hadamard's inequality.  Hence the rows are pairwise
orthogonal for a.e.\ $\varphi$.  Expanding their inner products in
$\{1,\cos2\varphi,\sin2\varphi\}$ gives, for $i\ne j$,
\[
\langle b_i,b_j\rangle=\langle c_i,c_j\rangle=0,
\qquad
\langle b_i,c_j\rangle+\langle c_i,b_j\rangle=0.
\]
\item Equality holds in
$\bigl|\int\det\bigr|\le\int|\det|$, so the determinant has a fixed sign
outside its zero set.
\end{enumerate}

By (E1), write
\[
b_i=\cos\alpha_i\,u_i,
\qquad
c_i=\varepsilon_i\sin\alpha_i\,u_i,
\]
where $u_i$ is unit, $\alpha_i\in[0,\pi/2]$, and
$\varepsilon_i\in\{\pm1\}$.  The phase of row $i$ then satisfies
\[
\frac{\delta_i}{2}\equiv
\varepsilon_i\Bigl(\frac\pi2-\alpha_i\Bigr)\pmod\pi.
\]
Its representative is $0$ in the vertical case, $\pi/2$ in the horizontal
case, and otherwise belongs to
$(-\pi/2,\pi/2)\setminus\{0\}$.  Condition (E2) therefore forces either all
rows to be vertical, giving $P=C_1\wedge\cdots\wedge C_d$, or all rows to be
nonvertical with a common pair $(\alpha,\varepsilon)$.  In the latter case,
(E3) makes $(u_i)$ an orthonormal basis of $\R^d$, and the underlying plane is
\[
P=\{(w,\lambda w):w\in\R^d\},
\qquad
\lambda=\varepsilon\tan\alpha\in\R.
\]
This is the scalar graph plane $\eta_\lambda$; $\lambda=0$ is the horizontal
case.  Finally, (E4) fixes the orientation: the displayed graph and vertical
orientations give $\Theta=1$, while their reversals give $\Theta=-1$.

\emph{The case $m=1$.}
The comass estimate remains valid, but $G(R)$ is constant and the strict
step used above disappears.  The equality cone is therefore larger.  Its
full classification is not needed in this paper; this loss of rigidity is
consistent with the Hopf equality fields on $S^3$.

Finally, the base projections: $d\pi(A\wedge\eta_\lambda)$ has full rank with Jacobian $(1+\lambda^2)^{-m}>0$ for every $\lambda\in\R$ (\S\ref{sec:decomp}), while $d\pi(A\wedge C_1\wedge\cdots\wedge C_d)$ has rank one ($d\pi A=v$, $d\pi C_i=0$).
\end{proof}

\begin{remark}
By Lemma~\ref{lem:comb}(i), on graph planes with block $\lambda I$, $\Theta=(1+\lambda^2)^m/(1+\lambda^2)^{m}=1$: the radial fields are calibrated at every regular point, for every $\lambda=\cot r$. For the Hopf block, $M$ is an orthogonal complex structure $J$ and $\det(I+tJ)=(1+t^2)^m$, so $\sigma_{2j}(J)=\binom mj$ and $\Phi_d(J)=\sum_j C_{2j}\binom mj=c_m$.  Its graph density is $2^m$, hence the pointwise ratio is $c_m/2^m<1$ for $m\ge2$; Hopf fields cease to be calibrated above $S^3$.
\end{remark}

\begin{proposition}[Charge identity]\label{prop:charge}
For every smooth unit $V$, $T_V(\Omega)=c_m\vol(S^{2m+1})$, and this passes to weak limits.
\end{proposition}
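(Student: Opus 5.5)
Since $\Omega$ is closed (Theorem~\ref{thm:closed}), the value $T_V(\Omega)$ depends only on the homology class of $T_V$. I will therefore show that all $T_V$ lie in one class, compute the value for a convenient representative, and finish with a weak-limit remark.

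\emph{Step 1: homotopy invariance.} For $m\ge1$ the sphere $S^{2m+1}$ carries smooth unit fields (for example the Hopf field). Any two smooth unit fields $V_0,V_1$ give sections $s_{V_0},s_{V_1}:S^n\to E$ lying over the identity. I will not try to homotope the fields through unit fields, since sections of a sphere bundle need not be homotopic. Instead I compare each $T_V$ with the radial model, whose $\Omega$-value is computable.

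\emph{Step 2: comparison with the radial field.} For $0<\varepsilon<\pi/2$, let $U_\varepsilon=\{\varepsilon<r<\pi-\varepsilon\}$ and consider the two sections $s_V$ and $s_{R_p}$ over $U_\varepsilon$. Over the two cap balls $\{r\le\varepsilon\}$ and $\{r\ge\pi-\varepsilon\}$, the graph $T_V$ has mass $O(\varepsilon^n)$, since $V$ is smooth there. On $U_\varepsilon$ the two graphs are images of maps over the same base with common boundary behaviour only up to fibers. I intend to write
\[
T_V-T_p=\partial Z+\text{(error)},
\]
where $Z$ is the image of a straight-line-then-normalized homotopy. This works away from the antipodal set $\{V=-R_p\}$, but I will avoid that issue altogether.

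\emph{Alternative, preferred route.} I use the pointwise formula instead:
\[
T_V(\Omega)=\int_{S^n}\Phi_d(M)\,d\vol .
\]
Brito--Chac\'on--Naveira showed that this integral equals $c_m\vol(S^{2m+1})$ for every smooth unit $V$. Here $\Phi_d(M)=\sum_j C_{2j}\sigma_{2j}(\nabla V|_{V^\perp})$, and each integral $\int\sigma_{2j}$ is a topological/curvature constant obtained from the divergence identities for $\sigma_k$ of unit fields on constant-curvature spheres. This gives an independent check. Invariance also follows abstractly: varying $V$ smoothly, the $\Omega$-value changes by
\[
\int s_V^*(\iota_W\, d\Omega)+d(\dots),
\]
which vanishes because $d\Omega=0$ and $S^n$ is closed. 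So the value is locally constant on the space of unit fields. To treat disconnected components I would use the explicit formula, since $\int\sigma_{2j}$ is a universal constant by the Brito--Chac\'on--Naveira computation.

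\emph{Step 3: evaluation.} I evaluate on the radial field by a limiting argument. By the Remark after Theorem~\ref{thm:comass}, $\Omega(\xi)=1$ on the regular part of the radial graph, so $\Omega$ integrated over the radial graph equals $\M(T_p)=c_m\vol(S^{2m+1})$ by Proposition~\ref{prop:radial}. To transfer this value to smooth $V$, note that $T_V-T_p$ has boundary $F_p+F_{-p}$, so it is not a cycle. Hence
\[
T_V(\Omega)-T_p(\Omega)=Z(d\Omega)\pm(\text{fiber terms}),
\]
where the fiber terms are integrals of a primitive over the pole fibers. Handling these fiber terms is the main obstacle. I expect them to vanish because $\Omega=a\wedge\Theta$ restricts to zero on vertical $d$-spheres, being an $n$-form of too high degree on them. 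Concretely, I would truncate at $r=\varepsilon$: the caps over $V$ have mass $O(\varepsilon^n)$, and the connecting current between $s_V$ and $s_{R_p}$ on $\partial B_\varepsilon$ has $\Omega$-mass bounded by the mass of a $d$-dimensional family of fiber arcs times area $\varepsilon^d$, which tends to zero.

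\emph{Step 4: weak limits.} If $T_{V_k}\rightharpoonup T$ as currents, then $T(\Omega)=\lim T_{V_k}(\Omega)$ because $\Omega$ is a fixed smooth form. Hence the identity passes to limits.
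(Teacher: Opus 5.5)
Your proposal has a genuine gap: it never proves the identity itself. Its only complete line is the ``preferred route''. But once $s_V^*\Omega=\Phi_d(\nabla V)\,d\vol$ is noted, the assertion that $\int_{S^n}\Phi_d(\nabla V)\,d\vol=c_m\vol(S^{2m+1})$ for \emph{every} smooth unit $V$ is just a restatement of the proposition. Citing it (via the integral formulas for $\int\sigma_{2j}$ that BCN rely on) is an attribution, not an argument.

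Your own arguments stop short. The first-variation computation gives constancy only on path components of the space of unit fields. As you note, these are not all connected: for $m=1$ they are indexed by $\pi_3(S^2)=\Z$.

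Step~3 fails as written. $T_p$ is not a cycle, and $\Omega$ has no global primitive, since it pairs nontrivially with the cycle of a Hopf field. So ``integrals of a primitive over the pole fibers'' has no meaning. More seriously, the discrepancy is not localized at the poles. $R_p$ has index $+1$ at $p$ while $V$ extends smoothly over the cap, so $V$ and $R_p$ are not homotopic as sections over any level sphere $\{r=c\}$, $\varepsilon\le c\le\pi-\varepsilon$. Hence the antipodal set $\Gamma=\{V=-R_p\}$ meets every such level sphere, and by coarea it has length at least $\pi-2\varepsilon$. Any arc-sweep chain between the two graphs therefore picks up the full fiber-sphere bundle over $\Gamma$ as extra boundary. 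Its mass is at least $(\pi-2\varepsilon)\vol(S^d)$, not $O(\varepsilon^d)$. Its $\Omega$-value does vanish: $\Omega$ evaluates there to $\pm\langle\dot\Gamma,v\rangle$, which integrates to zero over each fiber sphere under $v\mapsto-v$. That is an antisymmetry fact, not your degree reason; the correction is $n$-dimensional, so $\Omega$ vanishing on $d$-dimensional fibers is beside the point. And the chain itself is never built.

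The missing idea is that closedness of $\Omega$ only needs the $T_V$ to be \emph{homologous}, and homology is much coarser than homotopy here. Since $n$ is odd, the Euler class of $TS^n$ vanishes. The Gysin sequence of $S^d\to E\to S^n$, together with $H_1(S^n)=0$, then shows that $\pi_*:H_n(E;\Z)\to H_n(S^n;\Z)$ is an isomorphism. Every section cycle has $\pi_\#T_V=\llbracket S^n\rrbracket$, so $T_{V_0}-T_{V_1}=\partial W$ for an integral $(n+1)$-current $W$. Theorem~\ref{thm:closed} then gives $T_{V_0}(\Omega)=T_{V_1}(\Omega)$ for all smooth unit fields, whatever their homotopy classes. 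The common value is read off on a smooth representative: for a Hopf field the block is an orthogonal complex structure $J$, and $\Phi_d(J)=\sum_jC_{2j}\binom mj=c_m$ pointwise by Lemma~\ref{lem:comb}. That is the paper's proof, and it never touches the singular $T_p$.

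The same injectivity would also rescue your radial evaluation. $T_V-T_p+C_\gamma$ is a cycle with zero projection, hence a boundary, and $C_\gamma(\Omega)=0$ by the antisymmetry above. Your Step~4 on weak limits is fine.
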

\begin{proof}
Since $n$ is odd, the Euler class of $TS^n$ vanishes.  The Gysin sequence for the oriented sphere bundle $S^d\hookrightarrow E\to S^n$ (see, for example, \cite{BT}) gives $H_n(E;\Z)\cong H_n(S^n;\Z)=\Z$, with $\pi_\#$ an isomorphism in degree $n$. Any two section currents push forward to the generator $\llbracket S^n\rrbracket$, hence are homologous in $E$; realizing the homology by an integral $(n{+}1)$-current $W$ (Federer--Fleming) and using $d\Omega=0$ (Theorem~\ref{thm:closed}) gives $T_{V_0}(\Omega)-T_{V_1}(\Omega)=\partial W(\Omega)=W(d\Omega)=0$. To evaluate the common value, note $s_V^*\Omega=\Phi_d(\nabla V)\,d\vol$; for a Hopf field $V$ on $S^{2m+1}$ the shape block is an orthogonal complex structure $J$, with $\sigma_{2j}(J)=\binom mj$ and $\sigma_{\mathrm{odd}}(J)=0$ (from $\det(I+tJ)=(1+t^2)^m$), so $\Phi_d(J)=\sum_jC_{2j}\binom mj=c_m$ by Lemma~\ref{lem:comb}(ii), a constant: $T_V(\Omega)=c_m\vol(S^n)$. Weak convergence preserves the pairing with the fixed smooth form $\Omega$.
\end{proof}

\section{Relaxation and calibrated minimizers}\label{sec:framework}

\begin{definition}
For $\Lambda<\infty$, let $\mathcal G_m(\Lambda)=\{T_V:\ V\ \text{smooth unit},\ \M(T_V)\le\Lambda\}$ and $\Cm(\Lambda)$ its weak closure in $\Icur_n(E)$. Fix once and for all the \emph{Hopf cap} $\Lambda_H:=2^m\vol(S^{2m+1})+1$.
\end{definition}

\begin{lemma}[Sasaki versus ambient mass]\label{lem:sasaki-ambient}
View
\[
E=T^1S^n=\{(x,v)\in\R^{n+1}\times\R^{n+1}:
|x|=|v|=1,\ x\cdot v=0\}
\]
with both the Sasaki metric $g_{\rm S}$ and the metric $g_{\rm amb}$
induced by the Euclidean product.  If
$Z=(X,Y)\in T_{(x,v)}E$ and
\[
KZ:=Y+\langle X,v\rangle x,
\]
then
\[
|Z|_{\rm S}^2=|X|^2+|KZ|^2,\qquad
|Z|_{\rm amb}^2=|X|^2+|KZ|^2+\langle X,v\rangle^2.
\]
Consequently
\[
|Z|_{\rm S}\le |Z|_{\rm amb}\le\sqrt2\,|Z|_{\rm S}.
\]
For every integer-rectifiable $k$-current $S$ supported in $E$,
\[
\M_{\rm S}(S)\le \M_{\rm amb}(S)
\le 2^{k/2}\M_{\rm S}(S).
\]
\end{lemma}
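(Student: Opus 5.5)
The plan is to prove the pointwise norm identities first, then derive the norm comparison, and finally pass to masses through the area formula.

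\emph{Step 1: the tangent space.} Differentiating the constraints $|x|^2=|v|^2=1$ and $x\cdot v=0$ shows that $Z=(X,Y)\in T_{(x,v)}E$ exactly when
\[
X\cdot x=0,\qquad Y\cdot v=0,\qquad X\cdot v+Y\cdot x=0 .
\]

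\emph{Step 2: the connection map.} The connection map is the tangential part of the ambient derivative of $v$ along a curve. If $(x(t),v(t))$ is a curve in $E$, then $\tfrac{D}{dt}v=\dot v-\langle \dot v,x\rangle x$, since the normal of $S^n$ at $x$ is $x$. By Step 1, $\langle Y,x\rangle=-\langle X,v\rangle$, so
\[
KZ=Y+\langle X,v\rangle x .
\]
This agrees with the frame in Section~\ref{sec:prelim}: it gives $K(A)=-x+\langle v,v\rangle x=0$, $K(B_i)=0$ and $K(C_i)=e_i$, as it should.

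\emph{Step 3: the two norms.} Expanding directly,
\[
|Y|^2=|KZ-\langle X,v\rangle x|^2=|KZ|^2-2\langle X,v\rangle\langle KZ,x\rangle+\langle X,v\rangle^2 .
\]
The cross term vanishes because $\langle KZ,x\rangle=\langle Y,x\rangle+\langle X,v\rangle=0$. Hence $|Z|_{\rm amb}^2=|X|^2+|Y|^2=|X|^2+|KZ|^2+\langle X,v\rangle^2$. The Sasaki formula $|Z|_{\rm S}^2=|X|^2+|KZ|^2$ is the definition. Since $0\le\langle X,v\rangle^2\le|X|^2\le|Z|_{\rm S}^2$, we get
\[
|Z|_{\rm S}^2\le|Z|_{\rm amb}^2\le2|Z|_{\rm S}^2,
\]
which is the pointwise inequality.

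\emph{Step 4: passage to masses.} Write $S=\tau(M,\theta,\vec S)$ as an integer-rectifiable current, with $M$ countably $k$-rectifiable. On approximate tangent planes, both metrics induce inner products $g_{\rm S}|_P$ and $g_{\rm amb}|_P$ with $g_{\rm S}|_P\le g_{\rm amb}|_P\le 2g_{\rm S}|_P$. The two $k$-dimensional Hausdorff measures on $M$ have densities $\sqrt{\det G}$ for the corresponding Gram matrices in a fixed parametrization. Loewner monotonicity of the determinant then gives
\[
\sqrt{\det G_{\rm S}}\le\sqrt{\det G_{\rm amb}}\le 2^{k/2}\sqrt{\det G_{\rm S}} .
\]
To justify this, parametrize $M$ up to null sets by countably many Lipschitz maps from $\R^k$ and apply the area formula in each metric. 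Rectifiability and the null sets are the same for both metrics because the metrics are bi-Lipschitz equivalent. The multiplicity $\theta$ is metric-independent, and for a rectifiable current the mass is $\int_M|\theta|\,d\HH^k$. Integrating the density comparison therefore gives $\M_{\rm S}(S)\le\M_{\rm amb}(S)\le2^{k/2}\M_{\rm S}(S)$.

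The only delicate point is Step 2, namely identifying $K$ in ambient coordinates and checking the vanishing cross term. That hinges on the constraint $Y\cdot x=-X\cdot v$ from Step 1. Everything else is linear algebra and the area formula.
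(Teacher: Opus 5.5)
Your proposal is correct and follows the paper's proof. The tangent constraints give $\langle KZ,x\rangle=0$, hence the orthogonal splitting $Y=KZ-\langle X,v\rangle x$, and $\langle X,v\rangle^2\le|X|^2$ gives the pointwise comparison; your Loewner comparison of Gram determinants is exactly the paper's comparison of norms of simple $k$-vectors, integrated against $|\theta|$, and your Step~2 additionally verifies that $KZ$ is the Levi-Civita connection map, consistent with the Sasaki--Stiefel frame, which the paper takes for granted.
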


\begin{proof}
The tangent constraints are
$x\cdot X=v\cdot Y=X\cdot v+x\cdot Y=0$.
Hence $KZ\perp x,v$ and
$Y=KZ-\langle X,v\rangle x$, which gives the two displayed norm
identities.  Since
$|\langle X,v\rangle|\le|X|$, the pointwise comparison follows.
Taking exterior powers gives
$|\xi|_{\rm S}\le|\xi|_{\rm amb}\le2^{k/2}|\xi|_{\rm S}$ for every
simple $k$-vector $\xi$, and integration gives the mass comparison.
\end{proof}

\begin{proposition}[Compactness, charge, and attainment]\label{prop:compactness}
For every finite $\Lambda$, the class $\Cm(\Lambda)$ is compact in the intrinsic flat topology of $E$.  Every $T\in\Cm(\Lambda)$ is an integral cycle satisfying
\[
\pi_\#T=\llbracket S^n\rrbracket,
\qquad
T(\Omega)=c_m\vol(S^n),
\qquad
\M(T)\le\Lambda.
\]
Consequently
\[
m_{\mathrm{rel}}
:=
\min_{T\in\Cm(\Lambda_H)}\M(T)
\]
is attained.  We write
\[
\mathcal K_m
:=
\left\{
T\in\Cm(\Lambda_H):
\M(T)=T(\Omega)=c_m\vol(S^n)
\right\}
\]
for the calibrated floor class.
\end{proposition}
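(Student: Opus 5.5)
The plan is to prove compactness with the Federer--Fleming compactness theorem, then pass each listed property to weak limits, and finally get attainment by lower semicontinuity of mass. Let $T_k\in\mathcal G_m(\Lambda)$. Each $T_k$ is an integral cycle with $\M(T_k)\le\Lambda$ and $\partial T_k=0$, so $\M(T_k)+\M(\partial T_k)\le\Lambda$. The space $E$ is compact, and Lemma~\ref{lem:sasaki-ambient} makes intrinsic and ambient masses comparable. We may therefore apply the Federer--Fleming compactness theorem, in the ambient Euclidean picture and with the closure theorem, to extract a subsequence converging in flat norm to an integral current $T\in\Icur_n(E)$. Two points need care here. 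First, we need supports in the compact submanifold $E$, using either a Lipschitz retraction onto $E$ or the ambient flat norm restricted to currents in $E$. Second, we need ambient flat convergence to be equivalent to intrinsic flat convergence; this follows from the mass comparison together with a retraction argument, since fillings in $\R^{2n+2}$ can be pushed into $E$ by a nearest-point retraction from a tubular neighbourhood at a bounded cost in mass. For integral currents of bounded mass and boundary mass, flat convergence and weak convergence coincide, so the weak closure $\Cm(\Lambda)$ equals the flat sequential closure.

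Next come the listed properties. A diagonal argument shows the flat closure is closed, since a limit of elements of $\Cm(\Lambda)$ is a limit of smooth graph currents. For a limit $T$:
\begin{itemize}
\item $\partial T=0$, because the boundary operator is weakly continuous.
\item $\pi_\#T=\lim\pi_\#T_k=\llbracket S^n\rrbracket$, because $\pi$ is smooth and push-forward is weakly continuous.
\item $T(\Omega)=c_m\vol(S^n)$, by Proposition~\ref{prop:charge}.
\item $\M(T)\le\liminf\M(T_k)\le\Lambda$, by lower semicontinuity of mass under weak convergence.
\end{itemize}
Together these give compactness of $\Cm(\Lambda)$ and the displayed properties.

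For attainment, $\Cm(\Lambda_H)$ is nonempty. A Hopf field has graph density $2^m$, so its graph current has mass $2^m\vol(S^{2m+1})<\Lambda_H$. Take a minimizing sequence for $\M$ on $\Cm(\Lambda_H)$. By compactness it converges to some $T\in\Cm(\Lambda_H)$, and lower semicontinuity gives $\M(T)=m_{\rm rel}$.

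Finally, each $T$ in the class satisfies $\M(T)\ge T(\Omega)=c_m\vol(S^n)$, by the comass bound of Theorem~\ref{thm:comass} applied to the rectifiable representation. So $\mathcal K_m$ consists exactly of the relaxed minimizers whenever $m_{\rm rel}=c_m\vol(S^n)$. This is a definition rather than an assertion; the statement does not claim $\mathcal K_m$ is nonempty.

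The main obstacle is the bookkeeping between intrinsic and ambient flat topologies. Once the retraction argument is in place, everything else is standard weak continuity.
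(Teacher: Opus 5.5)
Your proposal is correct and follows the paper's proof: Federer--Fleming compactness for integral cycles of bounded mass, identification of the flat limit with the weak limit, weak continuity of $\partial$, $\pi_\#$ and of the pairing with $\Omega$ (Proposition~\ref{prop:charge}), lower semicontinuity of mass for attainment, and the comass inequality to identify $\mathcal K_m$ as the equality class. The only differences are cosmetic. You run Federer--Fleming ambiently, with $E$ as a compact Lipschitz neighbourhood retract, which is exactly the setting of \cite[\S4.2.17]{Fed}; a slicing step at a small distance level before retracting is what makes your ``pushed into $E$'' remark precise, since fillings need not stay in the tubular neighbourhood. The paper instead works through an intrinsic atlas. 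You also check here that $\Cm(\Lambda_H)$ is nonempty via the Hopf graph, whereas the paper does this only in the proof of Theorem~\ref{thm:main-intro}.
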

\begin{proof}
Let $T$ be a weak limit of graph currents $T_{V_j}$ with
$\M_{\rm S}(T_{V_j})\le\Lambda$.  The Federer--Fleming compactness theorem,
applied on the compact Riemannian manifold $(E,g_{\rm S})$ through a finite
atlas \cite[\S4.2.17]{Fed}, gives an intrinsically flat-convergent subsequence
with integral limit.  The same subsequence converges weakly to $T$, so its
integral limit is $T$; in particular, $\partial T=0$.  Continuity of
push-forward and Proposition~\ref{prop:charge} give the projection and charge
identities, while lower semicontinuity gives $\M(T)\le\Lambda$.

Now let $(T_j)\subset\Cm(\Lambda)$.  The same compactness theorem gives an
intrinsically flat-convergent subsequence.  Its weak limit belongs to
$\Cm(\Lambda)$ because that class is weakly closed by definition.  Thus
$\Cm(\Lambda)$ is flat compact.  Lower semicontinuity of mass then
gives attainment of $m_{\mathrm{rel}}$.  The comass inequality gives
$m_{\mathrm{rel}}\ge c_m\vol(S^n)$, so the displayed definition of
$\mathcal K_m$ is exactly the equality class.
\end{proof}

\section{Structure of calibrated limits}\label{sec:decomp}

Let $dV$ be the oriented volume form of $S^n$.  If
$T=\llbracket M,\theta,\xi\rrbracket\in\Icur_n(E)$, define the signed
horizontal Jacobian
\[
j_\pi(z):=\langle\pi^*dV,\xi(z)\rangle.
\]

\begin{theorem}[Positive graph--residual decomposition]\label{thm:mcs}
Let $U_j$ be smooth unit vector fields on $S^n$ such that
\[
\sup_j\M_{\rm S}(T_{U_j})<\infty,
\qquad
T_{U_j}\rightharpoonup T.
\]
Then $T$ is an integral cycle, $\pi_\#T=\llbracket S^n\rrbracket$, and
the following hold.

\begin{enumerate}[label=\textup{(\roman*)}]
\item
$j_\pi\ge0$ $\|T\|$-a.e.  Put
\[
M_{\rm gr}:=\{j_\pi>0\},\qquad M_{\rm res}:=\{j_\pi=0\},
\]
and
\[
T_{\rm gr}:=T\llcorner M_{\rm gr},\qquad
T_{\rm res}:=T\llcorner M_{\rm res}.
\]
These are integer-rectifiable finite-mass currents with mutually
singular mass measures and
\[
T=T_{\rm gr}+T_{\rm res},\qquad
\M(T)=\M(T_{\rm gr})+\M(T_{\rm res}).
\]
No finite-boundary-mass assertion is made for the two pieces.

\item
$J_n\pi=0$ $\|T_{\rm res}\|$-a.e.\ and
\[
\pi_\#(T_{\rm res}\llcorner f)=0
\]
for every bounded Borel function $f$.

\item
Up to an $\HH^n$-null set, $M_{\rm gr}$ is covered by countably many
$C^1$ graphs over open subsets of $S^n$.  There is a Borel unit section
$U:S^n\to TS^n$, unique a.e., such that
\[
M_{\rm gr}=\{(x,U(x)):x\in S^n\}
\quad\text{up to }\HH^n\text{-null sets},
\]
the multiplicity is one, and the orientation is the positive graph
orientation.  Equivalently, for a.e. $x$,
\[
\langle T_{\rm gr},\pi,x\rangle
=
\llbracket(x,U(x))\rrbracket.
\]

\item
The map $U:S^n\to\R^{n+1}$ is approximately differentiable a.e.\ and
\[
\int_{S^n}|D^{\rm ap}U|\,dV
\le
\M_{\rm amb}(T_{\rm gr})
\le
2^{n/2}\M_{\rm S}(T_{\rm gr}).
\]

\item
Let $B$ be an oriented coordinate chart, let
$y^\alpha(x,v)=v^\alpha$, and set
\[
\widehat{dx^i}
=
(-1)^{i-1}dx^1\wedge\cdots\wedge
\widehat{dx^i}\wedge\cdots\wedge dx^n.
\]
For
\[
\omega_{i\alpha}(\varphi)
=
\varphi(x)y^\alpha\widehat{dx^i},
\qquad
\varphi\in C_c^\infty(B),
\]
one has
\[
T_{\rm gr}\!\left(d\omega_{i\alpha}(\varphi)\right)
=
\int_B
\left(
U^\alpha\partial_i\varphi
+
\varphi D_i^{\rm ap}U^\alpha
\right)\,dx.
\]

\item
For every $i,\alpha$ the functional
\[
\mu_i^\alpha(\varphi)
:=
T_{\rm res}
\left(
\varphi\,dy^\alpha\wedge\widehat{dx^i}
\right)
\]
is a finite signed Radon measure on compact subsets of $B$, and
\[
D_iU^\alpha
=
D_i^{\rm ap}U^\alpha\,\mathcal L^n
+
\mu_i^\alpha
\quad\text{in }\mathcal D'(B).
\]
In particular, $U\in BV_{\rm loc}(B;\R^{n+1})$.  More precisely, for
$K\Subset B$,
\[
|\mu_i^\alpha|(K)
\le
\sup_{\pi^{-1}K}
\left|
dy^\alpha\wedge\widehat{dx^i}
\right|_{\rm S}\,
\|T_{\rm res}\|(\pi^{-1}K).
\]

\item
If
\[
T_{\rm res}\!\left(d\omega_{i\alpha}(\varphi)\right)=0
\quad
\text{for every }i,\alpha,\varphi,
\]
then $\mu_i^\alpha=0$ for all $i,\alpha$ and
$U\in W^{1,1}_{\rm loc}(B;\R^{n+1})$, with weak derivative
$D^{\rm ap}U$.
\end{enumerate}
\end{theorem}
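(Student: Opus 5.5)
The plan follows the Giaquinta--Modica--Sou\v{c}ek theory of Cartesian currents, adapted to the sphere bundle $E$.

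\textbf{Global properties and sign of $j_\pi$.} That $T$ is an integral cycle with $\pi_\#T=\llbracket S^n\rrbracket$ is already in Proposition~\ref{prop:compactness}. For (i), fix a nonnegative $\psi\in C^\infty(E)$ and test the smooth form $\psi\,\pi^*dV$. On graph currents,
\[
T_{U_j}(\psi\,\pi^*dV)=\int_{S^n}\psi(x,U_j(x))\,dV\ge0.
\]
Weak convergence gives $\int\psi\,j_\pi\,\theta\,d\HH^n\ge0$ for every such $\psi$, hence $j_\pi\ge0$ $\|T\|$-a.e. The splitting $T=T_{\rm gr}+T_{\rm res}$ is a Borel restriction of a rectifiable current, so additivity of mass and mutual singularity are automatic. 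For (ii), $j_\pi=0$ together with the orientation gives $J_n\pi=0$ on $M_{\rm res}$. The area formula then gives $\pi_\#(T_{\rm res}\llcorner f)=0$, since a pushforward of an $n$-rectifiable current to an $n$-manifold only sees points with nonzero Jacobian.

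\textbf{Graph structure of $T_{\rm gr}$.} For (iii), cover $M_{\rm gr}$, up to null sets, by countably many $C^1$ pieces on which $d\pi$ restricted to the approximate tangent plane is injective. By the implicit function theorem these are $C^1$ graphs over open sets of $S^n$. Slicing by $\pi$ shows that $\langle T_{\rm gr},\pi,x\rangle$ is a positive integer $0$-current for a.e.\ $x$, with total multiplicity $\sum\theta$. Because $j_\pi\ge0$, no cancellation can occur. Hence
\[
\pi_\#T_{\rm gr}=\pi_\#T-\pi_\#T_{\rm res}=\llbracket S^n\rrbracket
\]
forces each a.e.\ slice to be a single point with multiplicity one. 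This point defines $U$, which is Borel by a measurable-selection argument on the graph pieces. Approximate differentiability in (iv) follows because $U$ agrees on a set of full density with the $C^1$ graph maps. The mass bound follows from $|D^{\rm ap}U|\le$ the ambient graph density $\sqrt{\det(I+(D^{\rm ap}U)^{\top}D^{\rm ap}U)}$, combined with Lemma~\ref{lem:sasaki-ambient}.

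\textbf{Derivative formulas.} Item (v) is the area formula applied to the graph. Expanding $d\omega_{i\alpha}(\varphi)=y^\alpha\partial_i\varphi\,dx+\varphi\,dy^\alpha\wedge\widehat{dx^i}$ and pulling back by $x\mapsto(x,U(x))$ on each $C^1$ piece gives the integrand $U^\alpha\partial_i\varphi+\varphi D^{\rm ap}_iU^\alpha$.

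For (vi), note that $T(d\omega)=\partial T(\omega)=0$. Also $T_{\rm res}(y^\alpha\partial_i\varphi\,dx)=0$, because $dx$ vanishes on planes with $J_n\pi=0$. Therefore
\[
\int U^\alpha\partial_i\varphi=-\int\varphi D^{\rm ap}_iU^\alpha-\mu_i^\alpha(\varphi),
\]
which identifies $D_iU^\alpha$ with the sign convention to be matched to the statement. The measure $\mu_i^\alpha$ is Radon: $\varphi\circ\pi$ is continuous, the form is bounded in comass, and $\|T_{\rm res}\|$ is finite. This yields the stated estimate and BV regularity. Item (vii) is then immediate, since vanishing of $T_{\rm res}(d\omega_{i\alpha}(\varphi))=\mu_i^\alpha(\varphi)$ kills the singular part.

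The main obstacle is (iii): multiplicity one and single-valuedness of the slice. This requires carefully combining the sign of $j_\pi$, slicing theory for currents that are not known to have finite-mass boundaries on each piece, and the degree identity. The plan is to use slicing of $T$ itself, which is a cycle, and then restrict to $M_{\rm gr}$.
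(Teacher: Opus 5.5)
Your proposal is correct and follows the paper's proof essentially step for step: positivity of $j_\pi$ by testing $\psi\,\pi^*dV$ on the graphs, Borel splitting of the rectifiable carrier, the identity $\pi_\#T_{\rm gr}=N\llbracket S^n\rrbracket=\llbracket S^n\rrbracket$ forcing multiplicity one, the singular-value bound $|D^{\rm ap}U|\le\sqrt{\det(I+(D^{\rm ap}U)^{\top}D^{\rm ap}U)}$ for (iv), pullback on $C^1$ graph patches for (v), and $0=T(d\omega_{i\alpha}(\varphi))$ with the horizontal term killed on the residual for (vi)--(vii). The slicing difficulty you flag for (iii) does not actually arise: as in the paper, the area formula for $\pi$ restricted to $M_{\rm gr}$, where $j_\pi>0$, gives $\pi_\#T_{\rm gr}=N\llbracket S^n\rrbracket$ with $N(x)=\sum_{z\in M_{\rm gr}\cap\pi^{-1}(x)}\theta(z)$ directly, with no boundary-mass hypothesis on $T_{\rm gr}$.
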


\begin{proof}
The compactness statement and the cycle equation follow from
Proposition~\ref{prop:compactness}.  We prove the structural assertions.

\emph{Step 1: positivity and the two strata.}
For every nonnegative $\psi\in C^\infty(E)$,
\[
T_{U_j}(\psi\,\pi^*dV)
=
\int_{S^n}\psi(x,U_j(x))\,dV(x)\ge0.
\]
Passing to the weak limit gives
\[
\int_M\psi\,\theta\,j_\pi\,d\HH^n\ge0
\]
for all nonnegative $\psi$, hence $j_\pi\ge0$ $\|T\|$-a.e.
The two strata are Borel, because the approximate tangent and its
orientation are Borel on a rectifiable carrier.  Restriction to them
gives integer-rectifiable finite-mass currents; mutual singularity gives
mass additivity.  On $M_{\rm res}$ the $n$-Jacobian of $\pi$ vanishes, so
for every bounded Borel $f$ and every smooth compactly supported
$n$-form $\eta$ on the base,
\[
\pi_\#(T_{\rm res}\llcorner f)(\eta)
=
\int_{M_{\rm res}}
f\,\theta\,\langle\pi^*\eta,\xi\rangle\,d\HH^n
=
0.
\]

\emph{Step 2: graph patches and multiplicity one.}
A countably rectifiable set is covered, up to a null set, by countably
many $C^1$ submanifolds.  At every point of $M_{\rm gr}$ the differential
of $\pi$ on the tangent plane is an orientation-preserving isomorphism.
The inverse-function theorem on each covering submanifold therefore
gives a countable cover of $M_{\rm gr}$ by $C^1$ graphs over the base.

Apply the area formula to $\pi|_{M_{\rm gr}}$.  For a.e. $x$ put
\[
N(x)
:=
\sum_{z\in M_{\rm gr}\cap\pi^{-1}(x)}\theta(z).
\]
Because $j_\pi>0$ on $M_{\rm gr}$,
\[
\pi_\#T_{\rm gr}=N\,\llbracket S^n\rrbracket.
\]
On the other hand,
\[
\pi_\#T_{\rm gr}
=
\pi_\#T-\pi_\#T_{\rm res}
=
\llbracket S^n\rrbracket.
\]
Hence $N(x)=1$ for a.e. $x$.  Since every summand is a positive integer,
there is exactly one point $(x,U(x))$ in the fiber and its multiplicity
is one.  The countable graph representation makes $U$ Borel and
approximately differentiable a.e.; its approximate graph tangent is
$\operatorname{graph}(D^{\rm ap}U)$.

\emph{Step 3: derivative and mass control.}
For the ambient graph map $F(x)=(x,U(x))$,
\[
J_n^{\rm amb}F
=
\sqrt{\det\!\left(I+(D^{\rm ap}U)^*D^{\rm ap}U\right)}.
\]
If $\sigma_1,\dots,\sigma_n$ are the singular values of
$D^{\rm ap}U$, then
\[
\prod_{i=1}^n(1+\sigma_i^2)
\ge
1+\sum_{i=1}^n\sigma_i^2.
\]
Thus $J_n^{\rm amb}F\ge|D^{\rm ap}U|$.  Integrating over the
multiplicity-one graph and using Lemma~\ref{lem:sasaki-ambient} proves
item \textup{(iv)}.

\emph{Step 4: the coordinate formula.}
On every $C^1$ graph patch, pullback by $F$ gives, at a.e.\ point,
\[
F^*d\omega_{i\alpha}(\varphi)
=
\left(
U^\alpha\partial_i\varphi
+
\varphi D_i^{\rm ap}U^\alpha
\right)
dx^1\wedge\cdots\wedge dx^n.
\]
The integrand is in $L^1$ by item \textup{(iv)}.  Summing the patches and
using multiplicity one proves item \textup{(v)}.

\emph{Step 5: the singular derivative is carried by the residual.}
Because $T$ is a cycle,
\[
0=T\!\left(d\omega_{i\alpha}(\varphi)\right).
\]
The term
$\partial_i\varphi\,y^\alpha\,dx^1\wedge\cdots\wedge dx^n$
vanishes on $T_{\rm res}$, since its tangent has zero horizontal
$n$-Jacobian.  Therefore
\[
0=
\int_B
\left(
U^\alpha\partial_i\varphi
+
\varphi D_i^{\rm ap}U^\alpha
\right)\,dx
+
\mu_i^\alpha(\varphi).
\]
The mass estimate in item \textup{(vi)} shows that $\mu_i^\alpha$ is a
Radon measure.  Rewriting the last display in the definition of the
distributional derivative gives
\[
D_iU^\alpha
=
D_i^{\rm ap}U^\alpha\mathcal L^n+\mu_i^\alpha.
\]
This proves item \textup{(vi)}.  Finally, if the residual annihilates all
$d\omega_{i\alpha}(\varphi)$, its horizontal-$n$ term already vanishes,
so $\mu_i^\alpha=0$ and item \textup{(vii)} follows.
\end{proof}

\begin{remark}[Scope of the positivity]\label{rem:jpi-scope}
On the smooth graph sector the positivity of $j_\pi$ carries no selective information: for \emph{every} unit section $U$, the projection $\pi$ restricted to $\operatorname{graph}(U)$ is an orientation-compatible diffeomorphism onto $S^n$, so the area formula gives $j_\pi\,d\|T_U\|=dV$ identically, radial or not. The content of the theorem lies in what it forbids of \emph{limits}: negative horizontal sheets and multiplicity in the graph part. Quantitative use of the decomposition must therefore act through the residual, never through $j_\pi$ alone.
\end{remark}

\begin{lemma}[Calibration of the two strata]\label{lem:piececal}
If $T\in\mathcal K_m$, then
\[
\M(T_{\rm gr})=T_{\rm gr}(\Omega),
\qquad
\M(T_{\rm res})=T_{\rm res}(\Omega).
\]
\end{lemma}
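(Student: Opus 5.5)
The argument is a squeeze: mass additivity from Theorem~\ref{thm:mcs}(i) together with the pointwise comass bound of Theorem~\ref{thm:comass}, applied separately on the two mutually singular strata. Let $T\in\mathcal K_m$. Then $T$ is an integer-rectifiable current $\llbracket M,\theta,\xi\rrbracket$, and since $T\in\Cm(\Lambda_H)$ it is a weak limit of smooth graph currents with bounded mass. Hence Theorem~\ref{thm:mcs} applies and gives
\[
T=T_{\rm gr}+T_{\rm res},\qquad \M(T)=\M(T_{\rm gr})+\M(T_{\rm res}).
\]
Because $T_{\rm gr}=T\llcorner M_{\rm gr}$ and $T_{\rm res}=T\llcorner M_{\rm res}$ are restrictions to disjoint Borel sets, linearity also gives $T(\Omega)=T_{\rm gr}(\Omega)+T_{\rm res}(\Omega)$.

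For each piece $S\in\{T_{\rm gr},T_{\rm res}\}$, with carrier $M_S$ and orienting unit simple $n$-vector $\xi$, we have
\[
S(\Omega)=\int_{M_S}\theta\,\langle\Omega,\xi\rangle\,d\HH^n\le\int_{M_S}\theta\,d\HH^n=\M(S).
\]
This uses $\theta\ge1$ and $|\langle\Omega,\xi\rangle|\le1$, the latter being Theorem~\ref{thm:comass} applied at a.e. point to the oriented unit simple approximate tangent $n$-vector. Adding the two inequalities and using the defining identity of $\mathcal K_m$, namely $\M(T)=T(\Omega)$, gives
\[
\M(T_{\rm gr})+\M(T_{\rm res})=\M(T)=T(\Omega)=T_{\rm gr}(\Omega)+T_{\rm res}(\Omega).
\]
Each summand on the right is bounded by the corresponding summand on the left, so both inequalities must be equalities. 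This is exactly the claim. As a by-product, $\langle\Omega,\xi\rangle=1$ holds $\|T_{\rm gr}\|$-a.e. and $\|T_{\rm res}\|$-a.e.; this pointwise form is what will later pin each stratum to a branch of the equality cone.

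There is no real obstacle. The only point needing care is that both pieces have finite mass, so that $S(\Omega)$ is well defined and the subtraction is legitimate. Theorem~\ref{thm:mcs}(i) supplies this, and no boundary-mass control is needed, since $\Omega$ is simply paired against each piece and no boundary operator is involved.
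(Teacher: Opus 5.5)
Your proof is correct and is the same squeeze argument as the paper's: mass additivity from the mutually singular strata, the comass bound on each piece, and the calibration identity $\M(T)=T(\Omega)$ forcing both inequalities to be equalities. Your pointwise by-product, $\langle\Omega,\xi\rangle=1$ a.e.\ on each stratum, is exactly what the paper uses next in \eqref{eq:restangent}.
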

\begin{proof}
The decomposition has mutually singular mass measures, so
$\M(T)=\M(T_{\rm gr})+\M(T_{\rm res})$.  The comass inequality gives
$T_{\rm gr}(\Omega)\le\M(T_{\rm gr})$ and
$T_{\rm res}(\Omega)\le\M(T_{\rm res})$.  Their sums are equal because
$T$ is calibrated.  Both inequalities are therefore equalities.
\end{proof}

For a calibrated $T\in\mathcal K_m$, Lemma~\ref{lem:piececal},
Theorem~\ref{thm:mcs}, and Theorem~\ref{thm:comass} now give:
\begin{equation}\label{eq:restangent}
\xi_{T_\res}=A\wedge C_1\wedge\cdots\wedge C_d\quad\|T_\res\|\text{-a.e.},\qquad
\xi_{T_\gr}=A\wedge\eta_{\lambda(x)}\quad\|T_\gr\|\text{-a.e.}
\end{equation}
(both strata are calibrated a.e.; the residual stratum has $J_n\pi=0$, and the only such calibrated plane is $A\wedge\mathrm{Vert}$; the graph stratum has $j_\pi>0$, forcing the scalar branch --- note $\pi_*(A\wedge\eta_\lambda)=(1+\lambda^2)^{-m}\,v\wedge e_1\wedge\cdots\wedge e_d$ is \emph{positively} oriented for every real $\lambda$, and a reversed sheet would have $\Omega(\xi)=-1$, excluded by calibratedness).

\section{A boundary--mass identity}\label{sec:identity}

\begin{lemma}[Local boundary--mass identity]\label{lem:identity}
Let $T\in\mathcal K_m$, $T=T_\gr+T_\res$.  On every open set $O\Subset E$ carrying a smooth adapted frame $(A,B_i,C_i)$, and for every $i=1,\dots,d$,
\begin{equation}\label{eq:massid}
T_\res\llcorner(a\wedge c_i)=(\partial T_\res)\llcorner b_i=-(\partial T_\gr)\llcorner b_i
\quad\hbox{in }O,
\end{equation}
and
\[
\|T_\res\|\llcorner O=\big\| (\partial T_\gr)\llcorner b_i\big\|\llcorner O.
\]
No finiteness assumption on the full masses $\M(\partial T_\res)$ or $\M(\partial T_\gr)$ is required.  Consequently, if $\partial T_\gr$ is vertical, then $T_\res=0$.
\end{lemma}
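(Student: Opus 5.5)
The plan is to apply the distributional restriction formula \eqref{eq:restr} to the residual with the $1$-form $\omega=b_i$ of the local adapted coframe on $O$. Since $b_i$ is only defined on $O$, I will work throughout with test forms $\varphi$ compactly supported in $O$. For such $\varphi$ the products $b_i\wedge\varphi$, $db_i\wedge\varphi$ are smooth compactly supported forms on $E$. Hence \eqref{eq:restr} holds in $\mathcal D'(O)$ with no assumption on $\M(\partial T_\res)$, because it is simply the Leibniz rule $d(b_i\wedge\varphi)=db_i\wedge\varphi-b_i\wedge d\varphi$ tested against $T_\res$. This gives
\[
\partial(T_\res\llcorner b_i)=T_\res\llcorner db_i-(\partial T_\res)\llcorner b_i\quad\text{in }O.
\]

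\textbf{Killing terms by the tangent structure.} By \eqref{eq:restangent}, $\xi_{T_\res}=A\wedge C_1\wedge\cdots\wedge C_d$ holds $\|T_\res\|$-a.e. Every $b_j$ vanishes on $A$ and on each $C_k$. Therefore, for any form $\psi$, the evaluation $\langle b_j\wedge\psi,\xi_{T_\res}\rangle=0$, since expanding by the Laplace rule always feeds $b_j$ one of the spanning vectors. Two consequences follow.
\begin{enumerate}[label=\textup{(\alph*)}]
\item $T_\res\llcorner b_i=0$ in $O$, so the left-hand side above vanishes.
\item In the structure equation \eqref{eq:structure}, $db_i=a\wedge c_i-\sum_j\eta_{ij}\wedge b_j$, each connection term contains some $b_j$ and is annihilated by $T_\res$. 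Hence $T_\res\llcorner db_i=T_\res\llcorner(a\wedge c_i)$.
\end{enumerate}
Combining these yields the first equality of \eqref{eq:massid}. The second equality follows because $T$ is a cycle (Proposition~\ref{prop:compactness}), so $\partial T_\res=-\partial T_\gr$ as distributions, even though neither piece need have finite boundary mass.

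\textbf{Mass identity and conclusion.} I will compute the mass measure of the rectifiable current $T_\res\llcorner(a\wedge c_i)=\llbracket M_\res,\theta,(a\wedge c_i)\lrcorner\xi\rrbracket$. The interior product $(a\wedge c_i)\lrcorner(A\wedge C_1\wedge\cdots\wedge C_d)$ is, up to sign, $C_1\wedge\cdots\wedge\widehat{C_i}\wedge\cdots\wedge C_d$. This is a unit simple $(n-2)$-vector for the Sasaki metric, since the frame is orthonormal. Hence its mass measure equals $\theta\,\HH^n\llcorner M_\res=\|T_\res\|$ on $O$. Combined with \eqref{eq:massid}, this gives $\|T_\res\|\llcorner O=\|(\partial T_\gr)\llcorner b_i\|\llcorner O$. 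In particular, the right-hand side is a priori a distribution but is thereby shown to be a Radon measure. Finally, if $\partial T_\gr$ is vertical, meaning it is annihilated by restriction with each $b_i$, then $\|T_\res\|\llcorner O=0$. Covering $E$ by finitely many frame domains $O\Subset E$ then gives $T_\res=0$.

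\textbf{Main obstacle.} The delicate point is rigor in the absence of finite boundary mass. One must check two things. First, $(\partial T_\res)\llcorner b_i$ makes sense as the distribution $\varphi\mapsto\partial T_\res(b_i\wedge\varphi)$. Second, the pointwise vanishing argument for $b_j$ applies only to the rectifiable current $T_\res$ and never to its boundary. I will keep every vanishing statement at the level of $T_\res$ evaluated on smooth forms. I will also note that \eqref{eq:restangent} holds $\|T_\res\|$-a.e. with the orientation fixed by calibration, so the sign of the unit $(n-2)$-vector is immaterial for the mass computation.
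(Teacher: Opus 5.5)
Your proof is correct and follows the paper's argument step for step. It applies the restriction formula with $\omega=b_i$, kills $T_\res\llcorner b_i$ and the connection terms because $\xi_{T_\res}=A\wedge C_1\wedge\cdots\wedge C_d\in\Lambda^n\mathcal V$, transfers to $T_\gr$ via $\partial T=0$, uses the unit interior product $(a\wedge c_i)\lrcorner\xi$ for the mass identity, and concludes with a finite adapted-frame cover. One small wording point: $T_\res\llcorner(a\wedge c_i)$ is a finite-mass $(n-2)$-current carried by an $n$-rectifiable set rather than a rectifiable $(n-2)$-current, but your mass computation is unaffected.
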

\begin{proof}
The statement is local because the coframe is local.  On $O$, $b_i$ annihilates $\mathcal V\supset\Tan T_\res$, so $T_\res\llcorner b_i=0$.  Formula~\eqref{eq:restr} with $\omega=b_i$ gives $T_\res\llcorner db_i=(\partial T_\res)\llcorner b_i$.  Using \eqref{eq:structure}, all connection terms vanish on $T_\res$ and therefore $T_\res\llcorner(a\wedge c_i)=(\partial T_\res)\llcorner b_i$.  Since $\partial T=0$, the latter equals $-(\partial T_\gr)\llcorner b_i$.  Finally,
\[
(a\wedge c_i)\lrcorner(A\wedge C_1\wedge\cdots\wedge C_d)
=\pm C_1\wedge\cdots\widehat{C_i}\cdots\wedge C_d
\]
has unit norm, so restriction by $a\wedge c_i$ preserves the residual mass measure on $O$.  If $\partial T_\gr$ is vertical, every $b_i$ annihilates it locally; a finite adapted-frame cover of the compact carrier then gives $\|T_\res\|=0$.
\end{proof}

Thus the residual vanishes \emph{if and only if} the graph part's boundary carries no $b_i$-visible trace. The next section proves that, for calibrated limits, it cannot.

\section{Residual invisibility and Sobolev regularity}\label{sec:key}

Fix a base chart with coordinates $x=(x^1,\dots,x^n)$ and use \emph{ambient fiber components}: $y^\alpha(x,v)=v^\alpha$, $\alpha=1,\dots,n+1$, restricted to $E$. Set $\widehat{dx^i}=(-1)^{i-1}dx^1\wedge\cdots\wedge\widehat{dx^i}\wedge\cdots\wedge dx^n$, so $dx^i\wedge\widehat{dx^i}=dx^1\wedge\cdots\wedge dx^n=:dx$. For $\varphi\in C_c^\infty$ of the chart define the \emph{coordinate tests}
\[
\omega_{i\alpha}(\varphi):=\varphi(x)\,y^\alpha\,\widehat{dx^i},\qquad
d\omega_{i\alpha}=\partial_i\varphi\,y^\alpha\,dx+\varphi\,dy^\alpha\wedge\widehat{dx^i}.
\]

\begin{lemma}[Residual invisibility]\label{lem:key}
Let $T\in\mathcal K_m$, $m\ge2$ (the computation itself needs only $n-1\ge2$). Then for every $\varphi,i,\alpha$:
\[
T_\res\big(d\omega_{i\alpha}(\varphi)\big)=0.
\]
\end{lemma}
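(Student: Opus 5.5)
The plan is to prove the identity pointwise: the integrand $\langle d\omega_{i\alpha}(\varphi),\xi_{T_\res}\rangle$ should vanish at $\|T_\res\|$-a.e.\ point, using only the tangent structure \eqref{eq:restangent} of the residual. This does not need the cycle equation. Write $T_\res=\llbracket M_\res,\theta,\xi\rrbracket$. Then
\[
T_\res\big(d\omega_{i\alpha}(\varphi)\big)=\int_{M_\res}\theta\,\big\langle \partial_i\varphi\,y^\alpha\,dx+\varphi\,dy^\alpha\wedge\widehat{dx^i},\ \xi\big\rangle\,d\HH^n .
\]
By \eqref{eq:restangent}, which combines Lemma~\ref{lem:piececal}, Theorem~\ref{thm:mcs}(ii) and the equality classification of Theorem~\ref{thm:comass} (this is where $m\ge2$ is used), we may take $\xi=A\wedge C_1\wedge\cdots\wedge C_d$ $\|T_\res\|$-a.e. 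The two terms of $d\omega_{i\alpha}$ are handled separately.

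For the first term, $dx=\pi^*(dx^1\wedge\cdots\wedge dx^n)$ is a basic $n$-form. Its value on $\xi$ is a multiple of the determinant of $d\pi$ applied to $A,C_1,\dots,C_d$. Since $d\pi C_j=0$, this vanishes; equivalently, $J_n\pi=0$ on the residual by Theorem~\ref{thm:mcs}(ii). So the term $\partial_i\varphi\,y^\alpha\,dx$ integrates to zero.

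The second term is the only one requiring care. The form $\widehat{dx^i}$ is the pullback under $\pi$ of a base $(n-1)$-form, that is, a basic $d$-form. Expanding $\langle dy^\alpha\wedge\widehat{dx^i},A\wedge C_1\wedge\cdots\wedge C_d\rangle$ by the Laplace (shuffle) formula gives a signed sum of products: $dy^\alpha$ applied to one of the $n$ vectors, times $\widehat{dx^i}$ applied to the remaining $d$ vectors. Each such $d$-tuple is drawn from $\{A,C_1,\dots,C_d\}$ with one vector omitted, so it contains at least $d-1$ vertical vectors $C_j$. Because $d=n-1\ge2$, every tuple contains at least one $C_j$. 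A basic form vanishes as soon as one argument lies in $\ker d\pi$, so every product is zero.

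Hence the whole integrand vanishes $\|T_\res\|$-a.e., and $T_\res(d\omega_{i\alpha}(\varphi))=0$. The step I expect to need the most care is this counting for the mixed term. When $d=1$ the tuple $\{A\}$ survives and $dy^\alpha(C_1)\,\widehat{dx^i}(A)$ need not vanish, which is why the lemma notes that the computation needs $n-1\ge2$. For $m\ge2$, however, the main input is really the classification \eqref{eq:restangent}, which pins the residual tangent to the unique rank-one calibrated plane. As a consequence, Theorem~\ref{thm:mcs}(vii) then yields $U\in W^{1,1}_{\rm loc}$.
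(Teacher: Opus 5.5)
Your proposal is correct and follows the paper's proof essentially step for step. You use \eqref{eq:restangent} to fix the residual tangent as $A\wedge C_1\wedge\cdots\wedge C_d$, kill the $dx$ term by the rank-one base projection, and kill the mixed term by expanding in the slot that takes $dy^\alpha$, noting that once $d=n-1\ge2$ each remaining $d$-tuple contains some $C_r$ annihilated by the basic form $\widehat{dx^i}$; the integrand therefore vanishes $\|T_\res\|$-a.e., with no use of the cycle condition, exactly as in the paper.
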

\begin{proof}
By \eqref{eq:restangent} the tangent is $A\wedge C_1\wedge\cdots\wedge C_d$ a.e., with base projections $\pi_*A=v$, $\pi_*C_r=0$: rank one. The first term of $d\omega_{i\alpha}$ has $n$ base differentials; evaluated on any $n$-tuple from $\{A,C_1,\dots,C_d\}$ it vanishes, since at most one argument has nonzero base projection. For the second term, expand by which slot takes $dy^\alpha$:
\[
\begin{aligned}
(dy^\alpha\wedge\widehat{dx^i})(A,C_1,\dots,C_d)
&=dy^\alpha(A)\,\widehat{dx^i}(C_1,\dots,C_d)\\
&\quad+\sum_{r}(-1)^r dy^\alpha(C_r)\,\widehat{dx^i}(A,C_1,\dots,\widehat{C_r},\dots,C_d).
\end{aligned}
\]
Here $dy^\alpha(A)=-x^\alpha$ is generally nonzero --- but $\widehat{dx^i}(C_1,\dots,C_d)=0$ since every $C_r$ has zero base projection; and in each remaining summand the $(n-1)$-fold base form is evaluated on arguments of which only $A$ has nonzero base projection, so it vanishes because $n-1\ge2$. The integrand $\langle d\omega_{i\alpha},\xi\rangle$ therefore vanishes $\|T_\res\|$-a.e.; it is bounded, and $T_\res$ has finite mass, so the evaluation is zero.
\end{proof}

\begin{example}[The wall discriminant: invisibility uses calibration]\label{ex:wall}
Plain $\pi$-nullity would allow residual base rank $n-1$, and then the lemma fails. Let $J=\{x^i=0\}$, let $\alpha_x$ be a fiber geodesic from $a$ to $b$, and $S=\llbracket J\times\alpha\rrbracket$ (the residual wall of an ordinary $BV$-jump limit, cf.\ \cite{GMS}): base rank $n-1$. Then
\[
S\big(d\omega_{i\alpha}(\varphi)\big)=\pm\int_J\varphi(x)\,(b^\alpha-a^\alpha)\,d\HH^{n-1}(x)\neq0
\]
in general --- exactly the term cancelling the distributional jump of the graph part across $J$. So piecewise-smooth jump configurations are \emph{not} silenced, and Lemma~\ref{lem:key} genuinely spends the equality-cone collapse of Theorem~\ref{thm:comass}: the threshold is base rank $\le n-2$, and calibration buys rank one.
\end{example}

\begin{theorem}[Direct $W^{1,1}$ regularity]\label{thm:w11}
Let $T\in\mathcal K_m$, $m\ge2$, with the section $U$ from Theorem~\ref{thm:mcs}. Then
\[
U\in W^{1,1}(S^n;TS^n),\qquad |U|=1\ \text{a.e.},
\]
and the weak derivative of $U$ equals its approximate derivative, which by \eqref{eq:restangent} is the calibrated block
\begin{equation}\label{eq:calblock}
\nabla_XU=\lambda\,(X-\langle X,U\rangle U)\quad\text{for all }X,\qquad \lambda\in L^{2m}(S^n).
\end{equation}
\end{theorem}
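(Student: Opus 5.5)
The plan is to combine Lemma~\ref{lem:key} with Theorem~\ref{thm:mcs}(vii), chart by chart, and then read off the pointwise structure of the approximate derivative from \eqref{eq:restangent}. Since $T\in\mathcal K_m\subset\Cm(\Lambda_H)$ is a weak limit of smooth graph currents $T_{U_j}$ with uniformly bounded mass, Theorem~\ref{thm:mcs} applies. It gives the Borel unit section $U$ with $|U|=1$ a.e., and it gives the decomposition $D_iU^\alpha=D_i^{\rm ap}U^\alpha\,\mathcal L^n+\mu_i^\alpha$ in every oriented coordinate chart $B$.

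First, Lemma~\ref{lem:key} states that $T_\res(d\omega_{i\alpha}(\varphi))=0$ for all $i,\alpha,\varphi$. This is precisely the hypothesis of Theorem~\ref{thm:mcs}(vii), so $\mu_i^\alpha=0$ and $U\in W^{1,1}_{\rm loc}(B;\R^{n+1})$, with weak derivative equal to $D^{\rm ap}U$. Cover $S^n$ by finitely many charts. Global integrability then follows from item (iv): $\int|D^{\rm ap}U|\,dV\le 2^{n/2}\M_{\rm S}(T_\gr)<\infty$, and $U$ is bounded. Hence $U\in W^{1,1}(S^n;\R^{n+1})$. Tangency $\langle U(x),x\rangle=0$ and $|U|=1$ hold a.e. because the graph lies in $E$, so $U\in W^{1,1}(S^n;TS^n)$. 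The covariant derivative is the tangential projection of the ambient weak derivative; this is a pointwise algebraic operation and commutes with the a.e. identification.

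Next comes the calibrated block. By \eqref{eq:restangent}, at $\|T_\gr\|$-a.e. point the approximate tangent plane of the graph is $A\wedge\eta_{\lambda}$, up to a simultaneous rotation of the $B$- and $C$-frames. By Theorem~\ref{thm:mcs}(iii), the approximate graph tangent at $(x,U(x))$ is the graph of $D^{\rm ap}U$, so I compare it with the frame description of section tangent planes from Section~\ref{sec:calibration}. A graph plane spanned by $A+\sum_\beta b_\beta C_\beta$ and $B_\alpha+\sum_\beta M_{\beta\alpha}C_\beta$ equals $\mathrm{span}\{A\}\oplus\{(w,\lambda w)\}$ only if $b=0$ and $M=\lambda I_d$. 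In other words, $\nabla_UU=0$ and $\nabla_XU=\lambda X$ for $X\perp U$. Together with $\langle\nabla_XU,U\rangle=0$, which follows from $|U|=1$, this gives \eqref{eq:calblock} for a measurable function $\lambda$. One point needs care here: the frame formulas must hold for the approximate rather than the classical differential. They do, because on each $C^1$ graph patch the patch tangent and the approximate tangent agree a.e., and $U$ restricted to the patch is $C^1$ along it.

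Finally, integrability of $\lambda$ comes from the graph density. On a block $\lambda I_d$ with $b=0$ the density is $\rho=(1+\lambda^2)^{m}$, and the graph part has multiplicity one, so
\[
\int_{S^n}(1+\lambda^2)^m\,dV=\M(T_\gr)\le c_m\vol(S^n)<\infty.
\]
This yields $\lambda\in L^{2m}$. The main obstacle I expect is the identification step: showing that the approximate derivative recovered from the rectifiable carrier genuinely satisfies $b=0$ and $M=\lambda I$ a.e., including orientation and the correct sign of the $C$-components. That step depends on \eqref{eq:restangent} having been derived carefully, in particular on excluding reversed sheets via $j_\pi>0$. The regularity part is essentially immediate once Lemma~\ref{lem:key} is available.
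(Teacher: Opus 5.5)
Your proposal is correct and follows essentially the same route as the paper. Lemma~\ref{lem:key} is exactly the hypothesis that kills the residual measures $\mu_i^\alpha$ in Theorem~\ref{thm:mcs}(vi)--(vii), which gives $U\in W^{1,1}$ with weak derivative $D^{\rm ap}U$; then \eqref{eq:restangent} forces $b=0$ and $M=\lambda I_d$, and the multiplicity-one density $(1+\lambda^2)^m$ together with $\M(T_\gr)\le c_m\vol(S^n)$ yields $\lambda\in L^{2m}$. The orientation issue you flag as the main obstacle does no harm here, because the unoriented plane $\mathrm{span}\{A\}\oplus\{(w,\lambda w)\}$ already determines $b$ and $M$.
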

\begin{proof}
By Theorem~\ref{thm:mcs}(vi), in every base chart
\[
D_iU^\alpha
=
D_i^{\rm ap}U^\alpha\,\mathcal L^n+\mu_i^\alpha,
\qquad
\mu_i^\alpha(\varphi)
=
T_{\rm res}
\left(
\varphi\,dy^\alpha\wedge\widehat{dx^i}
\right).
\]
The horizontal-$n$ term of $d\omega_{i\alpha}(\varphi)$ vanishes on
$T_{\rm res}$.  Hence Lemma~\ref{lem:key} gives
$\mu_i^\alpha=0$ for every $i,\alpha$.  Thus
$U\in W^{1,1}_{\rm loc}$ and its weak derivative equals
$D^{\rm ap}U$.  The local statements glue because the $U^\alpha$ are
ambient components of one $\R^{n+1}$-valued map; the graph lies in $E$,
so $|U|=1$ and $x\cdot U=0$ a.e.

Calibration of the graph tangent, Theorem~\ref{thm:mcs} and \eqref{eq:restangent}, now identifies the weak covariant derivative as
\[
\nabla_XU=\lambda(X-\langle X,U\rangle U).
\]
The calibrated graph density is $(1+\lambda^2)^m$, and finite mass gives
$\lambda\in L^{2m}(S^n)$.
\end{proof}

\begin{remark}
The logical order matters: the identity ``$0=T(d\omega)$'' uses closedness of the \emph{full} current --- which holds --- and never a closedness claim for $T_\gr$ alone --- which would be false ($\partial T_\gr=-\partial T_\res$ is precisely what remains to be controlled). Lemma~\ref{lem:key} is what makes the full-current tests see only the graph part.
\end{remark}

\section{Rigidity of calibrated Sobolev sections}\label{sec:rigidity}

\begin{lemma}[Weak Bochner, contracted Ricci, and a product rule]\label{lem:weakcalc}
Let $M$ be a closed smooth Riemannian manifold of dimension $s+1$, with $s\ge4$, and use the convention $\Delta=\operatorname{div}\nabla$.  For $f\in W^{2,s}(M)$, the identities
\begin{align}
\frac12\Delta|\nabla f|^2
&=|\nabla^2f|^2+\langle\nabla f,\nabla\Delta f\rangle+\operatorname{Ric}(\nabla f,\nabla f),\label{eq:weakbochner}\\
\operatorname{div}(\nabla^2f)
&=d(\Delta f)+\operatorname{Ric}(\nabla f,\cdot)\label{eq:weakricci}
\end{align}
hold in distributions.  In \eqref{eq:weakbochner}, the middle term is understood as
\[
\langle\nabla f,\nabla\Delta f\rangle
=\operatorname{div}((\Delta f)\nabla f)-(\Delta f)^2.
\]
Moreover, if $F\in L^s(TM)$, $\operatorname{div}F\in L^{s/2}$ and $Z\in W^{1,s}(TM)\cap L^\infty$, then
\begin{equation}\label{eq:weakproduct}
\operatorname{div}(F\otimes Z)=(\operatorname{div}F)Z+\nabla_FZ
\end{equation}
in distributions, and both terms on the right lie in $L^{s/2}$ whenever $F\cdot\nabla Z\in L^{s/2}$.
\end{lemma}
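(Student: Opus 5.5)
The plan is to argue by density: establish the three identities for smooth $f$, where they are classical, and pass to the limit using that $C^\infty(M)$ is dense in $W^{2,s}(M)$ on a closed manifold. For smooth $f$, \eqref{eq:weakbochner} is the usual Bochner formula. Writing $\langle\nabla f,\nabla\Delta f\rangle=\operatorname{div}((\Delta f)\nabla f)-(\Delta f)^2$ is just the product rule. Identity \eqref{eq:weakricci} follows from commuting covariant derivatives, $\nabla^j\nabla_j\nabla_if=\nabla_i\Delta f+R_{ij}\nabla^jf$. Identity \eqref{eq:weakproduct} for smooth $F,Z$ is the Leibniz rule $\nabla^j(F_jZ^k)=(\nabla^jF_j)Z^k+F^j\nabla_jZ^k$.

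Next I will check that every term is at least $L^1$, so that each side defines a distribution continuous in the $W^{2,s}$ topology. Take $f_k\to f$ in $W^{2,s}$ and test the identity against a fixed $\psi\in C^\infty(M)$ (or a smooth vector field for \eqref{eq:weakricci}), moving all derivatives falling outside the natural regularity onto $\psi$.

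\begin{itemize}
\item In \eqref{eq:weakbochner}, the left side becomes $\tfrac12\int|\nabla f|^2\Delta\psi$. Since $s\ge4$ and $\dim M=s+1$, the Sobolev embedding $W^{2,s}\hookrightarrow W^{1,q}$ holds for $q=s(s+1)/(s+1-s)=s(s+1)$, which exceeds $2$. So $|\nabla f_k|^2\to|\nabla f|^2$ in $L^1$.
\item $|\nabla^2f_k|^2\to|\nabla^2 f|^2$ in $L^{s/2}\subset L^1$.
\item $\operatorname{Ric}(\nabla f_k,\nabla f_k)$ converges in $L^1$.
\item The middle term gives $-\int(\Delta f)\langle\nabla f,\nabla\psi\rangle-\int(\Delta f)^2\psi$, and both pieces converge since $\Delta f_k\to\Delta f$ in $L^s$ and $\nabla f_k\to\nabla f$ in $L^{s'}$ for every finite $s'$.
\item For \eqref{eq:weakricci}, testing against a vector field $Y$ gives $-\int\langle\nabla^2f,\nabla Y\rangle=-\int\Delta f\operatorname{div}Y+\int\operatorname{Ric}(\nabla f,Y)$. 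All three terms are linear in $f$ and continuous in $W^{2,s}$.
\end{itemize}

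For \eqref{eq:weakproduct}, I will mollify both factors. Pick smooth $F_k\to F$ in $L^s$ with $\operatorname{div}F_k\to\operatorname{div}F$ in $L^{s/2}$; Friedrichs-type smoothing on a closed manifold, via partition of unity and chartwise convolution, gives this because the commutator with div is lower order. Pick smooth $Z_k\to Z$ in $W^{1,s}$ with $\|Z_k\|_\infty$ bounded and $Z_k\to Z$ a.e.

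The tested identity is $-\int\langle F\otimes Z,\nabla\psi\rangle=\int(\operatorname{div}F)\langle Z,\psi\rangle+\int\langle\nabla_FZ,\psi\rangle$, where $\psi$ is now a smooth vector field. I pass to the limit term by term.

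\begin{itemize}
\item On the left, $F_k\otimes Z_k\to F\otimes Z$ in $L^1$, since $F_k\to F$ in $L^s$ and $Z_k\to Z$ boundedly a.e. (dominated convergence on the cross term).
\item The first right-hand term converges because $\operatorname{div}F_k\to\operatorname{div}F$ in $L^{s/2}\subset L^1$ while $Z_k$ is bounded and converges a.e.
\item The term $\nabla_{F_k}Z_k$ converges in $L^1$, by Hölder with $L^s\cdot L^s\subset L^{s/2}\subset L^1$.
\end{itemize}

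The final integrability clause is immediate. $(\operatorname{div}F)Z\in L^{s/2}$ because $Z\in L^\infty$, and $\nabla_FZ$ is pointwise bounded by $|F\cdot\nabla Z|$, which is assumed to lie in $L^{s/2}$.

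The main obstacle is the mollification step for $F$. I need $F_k$ converging with its divergence simultaneously in the right spaces, and convolution in charts produces a divergence commutator $[\operatorname{div},\rho_\varepsilon*]F$. I will handle it with the standard Friedrichs lemma: the commutator tends to $0$ in $L^s$, hence in $L^{s/2}$ on compact $M$.

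Everything else is Hölder bookkeeping. The restriction $s\ge4$ is only used to ensure that $\nabla f$ is integrable to high enough powers, which makes the quadratic terms $L^1$.
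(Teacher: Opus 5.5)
Your proposal is correct. For the Bochner and contracted Ricci identities it is the paper's own argument:
\begin{itemize}
\item approximate $f$ in $W^{2,s}$ by smooth functions;
\item use the embedding $W^{1,s}\hookrightarrow L^{s(s+1)}$ in dimension $s+1$ to control the only delicate quadratic term $(\Delta f)\nabla f$;
\item pass to the limit in the tested identities.
\end{itemize}

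The real difference is in the product rule \eqref{eq:weakproduct}, where the paper approximates only $Z$. Since $\operatorname{div}F$ is already an $L^{s/2}$ function, the identity $\operatorname{div}(F\otimes Z_k)=(\operatorname{div}F)Z_k+\nabla_FZ_k$ for smooth $Z_k$ needs no proof. It is the definition of the distributional divergence of $F$, tested against the smooth function $\langle Z_k,\psi\rangle$. One then passes to the limit using two facts. First, $Z_k\to Z$ in every $L^p$ with $p<\infty$, which follows from the uniform $L^\infty$ bound and $L^s$ convergence. Second, $F\cdot\nabla Z_k\to F\cdot\nabla Z$ in $L^{s/2}$.

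Your route also mollifies $F$, with simultaneous control of $\operatorname{div}F_k$ in $L^{s/2}$. This works. If you mollify componentwise in a chart, $\partial_j$ commutes with convolution, so the commutator with $\operatorname{div}$ is only the zeroth-order Christoffel term, which tends to $0$ in $L^s$; Friedrichs' lemma is more than you need. Still, the step you identify as the main obstacle can be skipped entirely. Your symmetric version has every identity classical before the limit is taken; the paper's asymmetric version is shorter.

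One small slip: $\nabla f_k\to\nabla f$ does not hold in $L^q$ for every finite $q$. It holds only for $q\le s(s+1)$, the exponent you computed yourself. You only need $q\ge s/(s-1)$ to pair with $\Delta f_k\in L^s$, so the argument is unaffected.
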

\begin{proof}
Work in a finite smooth atlas and use a partition of unity.  Smooth
approximation in $W^{2,s}$ gives $f_k\to f$ in $W^{2,s}$.  Since the
ambient dimension is $s+1$, the Sobolev embedding
$W^{1,s}\hookrightarrow L^{s(s+1)}$ implies
$\Delta f_k\,\nabla f_k\to\Delta f\,\nabla f$ in some $L^q$ with $q>1$;
all remaining quadratic terms converge in $L^{s/2}$.  The smooth Bochner and
contracted Ricci identities therefore pass to distributions, with the middle
Bochner term interpreted in the displayed divergence form.

For \eqref{eq:weakproduct}, choose smooth approximants $Z_k$ converging in
$W^{1,s}$ and bounded uniformly in $L^\infty$ (local convolution followed by
truncation, then patching).  Test the smooth product rule against a smooth
field.  The term $(\operatorname{div}F)Z_k$ converges in $L^1$ because
$\operatorname{div}F\in L^{s/2}$ and
$Z_k\to Z$ in $L^{(s/2)'}$, while
$F\cdot\nabla Z_k\to F\cdot\nabla Z$ in $L^{s/2}$.  The uniform boundedness
also gives $(\operatorname{div}F)Z\in L^{s/2}$.
\end{proof}

\begin{theorem}[Sobolev rigidity]\label{thm:rigidity}
Let $m\ge2$, $n=2m+1$, and let $U\in W^{1,1}(S^n;TS^n)$, $|U|=1$ a.e., satisfy
\[
\nabla_XU=\lambda\bigl(X-\langle X,U\rangle U\bigr)
\]
weakly for every smooth $X$, with $\lambda\in L^{2m}(S^n)$.  Then there is a single pole $p\in S^n$ such that $U=R_p$ a.e.
\end{theorem}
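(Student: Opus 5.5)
The plan is to show that $U$ is the normalized gradient of a scalar potential, derive a concircular (Obata-type) equation for that potential, and invoke Obata-type rigidity on the round sphere, all in the weak setting of Lemma~\ref{lem:weakcalc}.

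\emph{Step 1: $U$ is closed and geodesic.} The calibrated block is symmetric and satisfies $\nabla_UU=0$. Hence the $1$-form $U^\flat$ has weak exterior derivative $dU^\flat=0$, since the antisymmetric part of $\nabla U$ vanishes. On $S^n$ with $n\ge3$ we have $H^1=0$. Weak Hodge theory for $L^1$ forms with $L^1$ derivative gives $U^\flat=d\theta$ with $\theta\in W^{2,1}$, after first bootstrapping integrability as follows.

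\emph{Step 2: constraints on $\lambda$.} Take the trace and the divergence: $\operatorname{div}U=2m\lambda$. Derive the Codazzi-type compatibility by testing $\nabla_X\nabla_YU-\nabla_Y\nabla_XU-\nabla_{[X,Y]}U=R(X,Y)U$ weakly. The curvature of $S^n$ gives $R(X,Y)U=\langle Y,U\rangle X-\langle X,U\rangle Y$. Formally this yields $X\lambda=-(1+\lambda^2)\langle X,U\rangle$ for $X\perp U$ (the tangential part vanishes), together with the Riccati equation $U\lambda=-(1+\lambda^2)$ along $U$. Combined, these give $d\lambda=-(1+\lambda^2)U^\flat$, i.e.\ $d(\operatorname{arccot}\lambda)=U^\flat$.

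Making this rigorous with only $\lambda\in L^{2m}$ is the main obstacle. To get there I would set $f=\cos\theta$ heuristically, but more robustly work with the vector field $W=\sqrt{1+\lambda^2}^{-1}\cdot(\ldots)$. Concretely, I would test the weak identity $\operatorname{div}(\nabla U)$ via Lemma~\ref{lem:weakcalc}'s product rule with $F=\lambda$-terms to identify $d\lambda$ as a distribution. This shows $d\lambda\in L^1_{\rm loc}$ away from where $\lambda$ is infinite, and hence that $r:=\operatorname{arccot}\lambda\in(0,\pi)$ satisfies $dr=U^\flat$ weakly. Since $|dr|=1$ and $r$ is bounded, $r\in W^{1,\infty}$.

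\emph{Step 3: concircular equation and Obata.} Put $f=\cos r$. Then $\nabla f=-\sin r\,U$ and
\[
\nabla^2f=-\cos r\,U\otimes U-\sin r\,\lambda(g-U\otimes U)=-f\,g,
\]
using $\lambda\sin r=\cos r$. So $f\in W^{1,\infty}$ solves $\nabla^2f=-fg$ weakly. Then $\Delta f=-nf$, elliptic regularity makes $f$ smooth, and $f$ is a first eigenfunction. Hence $f(x)=\langle x,q\rangle$ for some $q\in\R^{n+1}$, with $|q|=1$ forced by $|\nabla f|^2+f^2=1$ (this passes through Lemma~\ref{lem:weakcalc}'s Bochner identity, or directly from smoothness). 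Therefore $r=\dist(x,p)$ with $p=q$, and $U=\nabla r/|\nabla r|\cdot(\pm)$. The sign convention for $R_p$ fixes $U=R_p$, or $U=R_{-p}$, which is again radial with pole $-p$.

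The delicate point throughout is Step 2: justifying the second-order compatibility when $U$ is only $W^{1,1}$ with $\lambda\in L^{2m}$. I would first try to prove $\lambda\in W^{1,1}_{\rm loc}$ from the distributional identity $\operatorname{div}(\lambda(g-U\otimes U))=\operatorname{div}\nabla U$. Then I would use a mollify-and-commute argument on $S^n$ with the $s=2m\ge4$ integrability in Lemma~\ref{lem:weakcalc} to pass Ricci identities to the limit.
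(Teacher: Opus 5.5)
Your outline (potential, then a Riccati equation for $\lambda$, then integration to a radial field) has the right shape. However, there is a genuine gap: Step~2 carries essentially the whole content of the theorem, it is not proved, and the devices you propose for it do not close it.

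The identity $\operatorname{div}(\lambda(g-U\otimes U))=\operatorname{div}\nabla U$ is a tautology, since $\nabla U=\lambda P$ is the hypothesis. It carries information only after $\operatorname{div}\nabla U=\operatorname{div}\nabla^2\theta$ is rewritten by the contracted Ricci identity as $s\,d\lambda+sU$, where $s=2m$. Then you must expand $\operatorname{div}(\lambda U\otimes U)$ as $\operatorname{div}(\lambda U)\,U+\lambda\nabla_UU$. That expansion is available only once $\operatorname{div}(\lambda U)$ is known to be a function, which is exactly the hypothesis $\operatorname{div}F\in L^{s/2}$ in \eqref{eq:weakproduct}.

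Your formal Codazzi computation, if tested weakly, involves $d\lambda\otimes P$ and contractions with $U$. These are products of the order-one distribution $d\lambda$ (with $\lambda$ only in $L^{2m}$) with the rough tensors $P,U\in W^{1,2m}$, and you give no way to define or control them. Also, ``$d\lambda\in L^1_{\rm loc}$ away from where $\lambda$ is infinite'' has no meaning for an $L^{2m}$ function.

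The missing idea is to obtain the Riccati equation along $U$ \emph{first}, in a form where every term is defined, by using the eikonal constraint. Take the potential with $\nabla\theta=U$, $|\nabla\theta|\equiv1$, $\nabla^2\theta=\lambda P$ and $\Delta\theta=s\lambda$. The weak Bochner identity \eqref{eq:weakbochner} then reads $0=s\lambda^2+s\operatorname{div}(\lambda U)-s^2\lambda^2+s$, that is, $\operatorname{div}(\lambda U)=(s-1)\lambda^2-1\in L^{s/2}$.

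Only then do \eqref{eq:weakricci}, \eqref{eq:weakproduct} with $F=\lambda U$, $Z=U$, and $\nabla_UU=\lambda PU=0$ combine. In one-form notation they give $d\lambda-\bigl((s-1)\lambda^2-1\bigr)U^\flat=s\,d\lambda+sU^\flat$, i.e.\ $d\lambda=-(1+\lambda^2)U^\flat$ as an identity of $L^{s/2}$ functions. Hence $\lambda\in W^{1,s/2}$ and $d(\operatorname{arccot}\lambda)=U^\flat$. This is how the paper proceeds, and nothing in your Step~2 replaces it.

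Three smaller points.

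\emph{Step 1.} No bootstrapping is needed, and $L^1$ Hodge theory should not be invoked, since Calder\'on--Zygmund estimates fail at $p=1$. The hypothesis already gives $\nabla U=\lambda P\in L^{2m}$, so $U\in W^{1,2m}$. Solving $\Delta\theta=\operatorname{div}U$ in $L^{2m}$ gives $\theta\in W^{2,2m}$, which is exactly the regularity Lemma~\ref{lem:weakcalc} needs, because $2m\ge4$.

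\emph{Step 3.} This is correct once $dr=U^\flat$ and $\lambda=\cot r$ are in hand, and it is a legitimate alternative to the paper's ending. You verify the concircular equation $\nabla^2(\cos r)=-\cos r\,g$ and then use elliptic regularity together with the first eigenspace of $S^n$. The paper instead integrates the same equation at first order: it shows that $p=\cos r\,x-\sin r\,U$ (that is, $fx+\nabla f$ with $f=\cos r$) has vanishing derivative, which avoids spectral theory.

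\emph{Sign.} Since $\nabla r=U$ exactly, you get $U=\nabla\operatorname{dist}(\cdot,q)=R_q$ directly; there is no sign left to fix.
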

\begin{proof}
Put $s:=2m=n-1$ and $P:=I-U\otimes U$.

\emph{Step 1: a global potential.}
The equation gives $\nabla U=\lambda P\in L^s$, hence
$U\in W^{1,s}$.  The one-form $u:=U^\flat$ is weakly closed because
$\nabla U$ is symmetric.  Moreover,
$\operatorname{div}U=s\lambda\in L^s$ and has integral zero.  Solve the
zero-mean equation
\[
\Delta f=\operatorname{div}U,
\qquad \Delta=\operatorname{div}\nabla.
\]
Elliptic regularity gives $f\in W^{2,s}$.  The one-form
$(U-\nabla f)^\flat$ is weakly closed and coclosed, hence weakly harmonic.
By elliptic regularity and $H^1_{\mathrm{dR}}(S^n)=0$, it vanishes.  Thus
$df=u$.  Since $|df|=1$ a.e., $f$ has a Lipschitz representative, and
\begin{equation}\label{eq:hessf}
\nabla f=U,
\qquad
\nabla^2f=\lambda P\in L^s,
\qquad
\Delta f=s\lambda.
\end{equation}
Thus $f\in W^{2,s}(S^n)$.

\emph{Step 2: the weak Riccati equation.}  Since $|\nabla f|^2=1$, the weak Bochner identity \eqref{eq:weakbochner}, together with $|\nabla^2f|^2=s\lambda^2$ and $\operatorname{Ric}=s g$ on the unit sphere, yields
\begin{equation}\label{eq:Ulambda}
U\lambda=-(1+\lambda^2)
\quad\hbox{in distributions}.
\end{equation}
Here $U\lambda$ means $\operatorname{div}(\lambda U)-\lambda\operatorname{div}U$; this is legitimate because $\operatorname{div}U=s\lambda$.  Consequently
\begin{equation}\label{eq:divlambdaU}
q:=\operatorname{div}(\lambda U)
=U\lambda+\lambda\operatorname{div}U
=-1+(s-1)\lambda^2\in L^{s/2}.
\end{equation}

Apply the contracted Ricci identity \eqref{eq:weakricci} to \eqref{eq:hessf}.  In one-form notation,
\begin{equation}\label{eq:contracted-sub}
\nabla\lambda-\operatorname{div}(\lambda U\otimes U)
=s\nabla\lambda+sU.
\end{equation}
The product rule \eqref{eq:weakproduct}, with $F=\lambda U$ and $Z=U$, gives
\[
\operatorname{div}(\lambda U\otimes U)
=qU+\lambda\nabla_UU=qU,
\]
because $\nabla_UU=\lambda PU=0$.  Substituting \eqref{eq:divlambdaU} in \eqref{eq:contracted-sub} gives
\[
-(s-1)\nabla\lambda
=\bigl(s-1+(s-1)\lambda^2\bigr)U.
\]
Therefore
\begin{equation}\label{eq:riccati}
\boxed{\ \nabla\lambda=-(1+\lambda^2)U\ }
\quad\text{a.e.},
\end{equation}
and in particular $\lambda\in W^{1,s/2}(S^n)$.  This derivation avoids multiplying an a priori distributional derivative of $\lambda$ by the rough projection $P$.

\emph{Step 3: the distance and the pole.}  Let $r:=\operatorname{arccot}\lambda\in(0,\pi)$.  The Sobolev chain rule and \eqref{eq:riccati} give
\[
\nabla r=-\frac{\nabla\lambda}{1+\lambda^2}=U.
\]
Define
\[
p(x):=\cos r(x)\,x-\sin r(x)\,U(x)\in\R^{n+1}.
\]
The preceding regularity gives $p\in W^{1,s/2}$.  For a tangent vector $X$, the weak product and chain rules, the Gauss formula $D_XU=\nabla_XU-\langle X,U\rangle x$, and $\lambda=\cot r$ yield
\[
D_Xp=(\cos r-\sin r\,\cot r)\bigl(X-\langle X,U\rangle U\bigr)=0.
\]
Hence $p$ is a.e.\ constant on the connected sphere.  Also $|p|=1$ and $\langle p,x\rangle=\cos r$, so $r=\operatorname{dist}(x,p)$ a.e.\ and
\[
U=\frac{\cos r\,x-p}{\sin r}=R_p
\quad\text{a.e.}
\]
\end{proof}

\begin{remark}
The rigidity proof uses only the scalar calibrated block and the integrability $\lambda\in L^{n-1}$ supplied by finite calibrated graph mass.  The restriction $m\ge2$ enters earlier, in the equality-cone classification: for $m=1$ the finite equality cone contains the conformal Hopf blocks and does not reduce to the scalar equation above.
\end{remark}

\section{Proof of the strict gap}\label{sec:proof}

\begin{proof}[Proof of Theorem~\ref{thm:main-intro}]
Fix $m\ge2$.  We first show that the calibrated floor class is empty.  Suppose
that $T\in\mathcal K_m$.  Decompose $T=T_\gr+T_\res$ as in
Theorem~\ref{thm:mcs}; the two tangent types are fixed by
\eqref{eq:restangent}.  Lemma~\ref{lem:key} and Theorem~\ref{thm:w11} show
that the graph section $U$ lies in $W^{1,1}$, has the calibrated scalar block,
and satisfies $\lambda\in L^{2m}$.  Theorem~\ref{thm:rigidity} gives
$U=R_p$ a.e.\ for one pole $p$.  The multiplicity-one graph description in
Theorem~\ref{thm:mcs} therefore identifies $T_\gr$ with the radial graph
current $T_p$.  Proposition~\ref{prop:radial} gives
\[
\partial T_\gr=-(F_p+F_{-p}).
\]
Each pole fiber is vertical, so every local contact form $b_i$ annihilates this
boundary.  Lemma~\ref{lem:identity}, applied on a finite adapted-frame cover,
then gives $T_\res=0$, and hence $T=T_p$.  This is impossible: every element
of $\Cm(\Lambda_H)$ is a cycle, whereas $\partial T_p\ne0$.  Therefore
\[
\mathcal K_m=\varnothing.
\]

By Proposition~\ref{prop:compactness}, the relaxed minimum is attained, and
the comass inequality gives $m_{\mathrm{rel}}\ge c_m\vol(S^n)$.  Equality
would put a minimizer in $\mathcal K_m$, so
\[
m_{\mathrm{rel}}>c_m\vol(S^n).
\]
Finally, $\binom{2m}{m}>2^m$, hence $c_m<2^m$.  A Hopf graph belongs to
$\mathcal G_m(\Lambda_H)$, and therefore
\[
m_{\mathrm{rel}}\le2^m\vol(S^n)<\Lambda_H.
\]
Every smooth graph either has mass at most $\Lambda_H$, in which case its mass
is at least $m_{\mathrm{rel}}$, or has mass greater than
$\Lambda_H>m_{\mathrm{rel}}$.  Thus the smooth infimum is at least
$m_{\mathrm{rel}}$ and is strictly larger than $c_m\vol(S^n)$.
\end{proof}

\section{The flat-boundary floor and radial repulsion}\label{sec:floor}

Theorem~\ref{thm:closed} established that the angular form $\Theta$ is
itself closed.  Invariantly, $\Theta=\iota_A\Omega$: the form $\Theta$ has no
$a$-factor.  Thus $\Theta$ is a globally defined $d$-form on $E$, and
$d\Theta=0$ is equivalent to the geodesic-flow invariance
$\mathcal L_A\Omega=0$.  Closed comass-one forms in the fiber direction are
the natural dual objects for pricing fillings of fiber boundaries; this
section uses $\Theta$ for that purpose.

\begin{lemma}\label{lem:theta-comass}
$\|\Theta\|_*=1$ on all $d$-planes of $TE$, and for every Lipschitz $f$ on $S^n$,
$\|\pi^*df\wedge\Theta\|_*\le \operatorname{Lip}(f)$.
\end{lemma}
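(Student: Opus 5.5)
The proof has two parts: the comass bound $\|\Theta\|_*=1$ on all $d$-planes of $TE$, and then the bound for $\pi^*df\wedge\Theta$.

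\textbf{Comass of $\Theta$ on all of $TE$.} The comass proof of Theorem~\ref{thm:comass} treated $\Theta$ only on $d$-planes inside the contact distribution $\mathscr D=\ker a$. Since $\Theta$ has no $a$-factor, $\Theta=\iota_A\Omega$ is annihilated by $\iota_A$. Let $P$ be a unit simple $d$-vector at $y$, and let $\mathrm{pr}:T_yE\to\mathscr D_y$ be the Sasaki-orthogonal projection along $A$. Then $\Theta(P)=\Theta(\wedge^d\mathrm{pr}\,P)$. Orthogonal projection does not increase the norm of a simple $d$-vector, and $\wedge^d\mathrm{pr}\,P$ is simple in $\mathscr D$. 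Applying the bound $|\Theta|\le1$ on $\mathscr D$ from Theorem~\ref{thm:comass}, rescaled to the norm of the projected vector, gives $|\Theta(P)|\le\|\wedge^d\mathrm{pr}\,P\|\le1$. Equality is attained on $B_1\wedge\cdots\wedge B_d$, because $\Theta(B_1\wedge\cdots\wedge B_d)=C_d\,\tau_d(\cdot)=C_{2m}=1$. Hence the comass is exactly $1$.

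\textbf{The wedge with $\pi^*df$.} First reduce to smooth $f$. For Lipschitz $f$, $df$ exists a.e. with $|df|\le\operatorname{Lip}(f)$, and the comass bound is pointwise wherever $df$ exists. At a point $y$, set $\omega=\pi^*df$. This is a $1$-form with Sasaki norm $|df(x)|$: $d\pi$ is an isometry from the horizontal space $\mathrm{span}\{A,B_i\}$ onto $T_xS^n$ and kills the vertical directions. Let $\nu$ be the metric dual of $\omega$, a horizontal vector of length $|df|$, and split an $(n)$-plane $\xi$ as $\xi=\hat\nu\wedge\xi'+\xi''$ with $\iota_{\hat\nu}\xi''=0$, where $\hat\nu=\nu/|\nu|$. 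Then
\[
(\omega\wedge\Theta)(\xi)=|\nu|\,\Theta(\iota_{\hat\nu}\xi).
\]
For a unit simple $\xi$, $\iota_{\hat\nu}\xi$ is simple of norm at most $1$: choose an orthonormal basis of $\xi$ whose first vector is the unit projection of $\hat\nu$, and the interior product is a multiple of the remaining $d$-vector by a factor $\le1$. The first part then gives
\[
|(\omega\wedge\Theta)(\xi)|\le|df(x)|\le\operatorname{Lip}(f).
\]
This is the standard inequality $\|\omega\wedge\phi\|_*\le|\omega|\,\|\phi\|_*$ for a $1$-form $\omega$, and it is simple to verify in this form.

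\textbf{Main obstacle.} The only genuinely delicate point is the first step: the comass bound proved earlier used the contact coordinates $b_i,c_i$ of the plane. We must check that a plane with an $A$-component evaluates through its projection to $\mathscr D$, i.e.\ that $\Theta$ contains no $a$, and that projection is norm-nonincreasing on simple vectors. Both are immediate, since $\Theta$ is built from $b_i,c_i$ only and $\mathrm{pr}$ is an orthogonal projection. The Lipschitz case also needs the remark that comass here is an a.e. pointwise statement. That is the sense in which it will be used when pairing with finite-mass currents in Section~\ref{sec:floor}.
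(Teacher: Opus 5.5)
Your proof is correct and follows the paper's argument. For the first claim, you project onto $\mathscr D=\ker a$, which $\Theta$ cannot distinguish since it has no $a$-factor, and invoke the contact-hyperplane bound from the proof of Theorem~\ref{thm:comass}; the paper phrases the same step as $\Theta(P)=\Omega(A\wedge P)$. For the wedge, you expand $\pi^*df\wedge\Theta$ along an orthonormal basis adapted to the dual of $\pi^*df$, using that $\pi$ is a Riemannian submersion, exactly as the paper does. The only difference is your equality witness: you use the horizontal plane $B_1\wedge\cdots\wedge B_d$ with $C_{2m}=1$, whereas the paper uses the fiber plane with $C_0=1$, and both work.
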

\begin{proof}
Every monomial of $\Theta$ uses only the $b_i,c_i$, so $\Theta$ annihilates any argument proportional to $A$; for a unit simple $d$-vector $\xi$, replacing each factor by its orthogonal projection into the contact hyperplane $\mathscr D=\ker a$ changes nothing and does not increase the norm, so we may take $\xi=P\subset\mathscr D$ simple with $\|P\|\le1$. Then $A\wedge P$ is simple of norm $\le1$ and $\Theta(P)=\Omega(A\wedge P)$, so $|\Theta(P)|\le\|\Omega\|_*=1$ by Theorem~\ref{thm:comass}; a fiber $d$-plane attains equality ($\Theta=\tau_0$ there, with unit value). For the wedge: on any $(d{+}1)$-plane choose an orthonormal basis whose first vector spans the restriction of the $1$-form $\pi^*df$; expanding along that slot gives $|(\pi^*df\wedge\Theta)(\xi)|\le\|\pi^*df\|\,\|\Theta\|_*\le\operatorname{Lip}(f)$, since $\pi$ is a Riemannian submersion.
\end{proof}

\begin{theorem}[Flat-boundary floor]\label{thm:floor}
Let $m\ge1$, let $\Sigma$ be a current of finite mass and dimension $n$ in $E$, and let
$\beta:=\Fflat\big(\partial\Sigma-(F_p+F_{-p})\big)<\infty$, the flat norm taken in $E$ with the ambient-fixed pole-fiber convention of Section~\ref{sec:prelim}. Then
\begin{equation}\label{eq:floor}
\M(\Sigma)\ \ge\ \pi\,\vol(S^{2m})\ -\ \tfrac{\pi}{2}\,\beta.
\end{equation}
\end{theorem}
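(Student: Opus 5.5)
The plan is a "dual calibration" argument. I will pair $\Sigma$ with the closed form $\Theta$ weighted by a function of the distance to $p$. Let $f$ be a function on $S^n$ depending only on $r=\dist(\cdot,p)$, and consider the $d$-form $\pi^*f\,\Theta$. By Theorem~\ref{thm:closed}, $d\Theta=0$, so
\[
d(\pi^*f\,\Theta)=\pi^*df\wedge\Theta .
\]
Lemma~\ref{lem:theta-comass} gives the two comass bounds
\[
\|\pi^*f\,\Theta\|_*\le\sup|f|,\qquad \|\pi^*df\wedge\Theta\|_*\le\operatorname{Lip}(f).
\]
The natural choice is $f=r-\tfrac{\pi}{2}$, for which $\sup|f|=\pi/2$ and $\operatorname{Lip}f=1$. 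Since $r$ is not smooth at $\pm p$, I will use smooth radial approximants $f_\varepsilon$. Each $f_\varepsilon$ agrees with $r-\pi/2$ outside $\varepsilon$-balls around the poles and equals $\mp\pi/2$ at $\pm p$. It can be built by flattening $r$ near the poles, for example $f_\varepsilon=h_\varepsilon(r)$ with $h_\varepsilon$ monotone, $|h_\varepsilon'|\le1$ and $|h_\varepsilon|\le\pi/2$. The exact pole values are not strictly needed; values within $O(\varepsilon)$ suffice, and I then let $\varepsilon\to0$.

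The main chain of estimates runs as follows. Since $\M(\Sigma)<\infty$ and the test form is smooth,
\[
\M(\Sigma)\ \ge\ |\Sigma(\pi^*df\wedge\Theta)|=|\partial\Sigma(f\Theta)| .
\]
This evaluation is legitimate without any finiteness of $\M(\partial\Sigma)$. Next take any decomposition $\partial\Sigma-(F_p+F_{-p})=R+\partial S$ with $\M(R)+\M(S)$ close to $\beta$. Then
\[
\partial\Sigma(f\Theta)=(F_p+F_{-p})(f\Theta)+R(f\Theta)+S(df\wedge\Theta),
\]
and the last two terms are bounded in absolute value by $\tfrac{\pi}{2}\M(R)+1\cdot\M(S)\le\tfrac{\pi}{2}(\M(R)+\M(S))$, using $\pi/2\ge1$. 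Taking the infimum over decompositions gives
\[
\M(\Sigma)\ \ge\ |(F_p+F_{-p})(f\Theta)|-\tfrac{\pi}{2}\beta .
\]

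It remains to show that $|(F_p+F_{-p})(f\Theta)|=\pi\vol(S^d)$. On a fiber, $\Theta$ restricts to $\tau_0=c_1\wedge\cdots\wedge c_d$, which is the oriented volume form of the fiber sphere in the orientation of $\{x,v\}^\perp$ induced by $(x,v)$ and the ambient orientation (Lemma~\ref{lem:globalforms}). Hence $F_{\pm p}(\Theta)=\sigma_\pm\vol(S^d)$ with signs $\sigma_\pm\in\{\pm1\}$, and
\[
(F_p+F_{-p})(f\Theta)=-\tfrac{\pi}{2}\sigma_+\vol(S^d)+\tfrac{\pi}{2}\sigma_-\vol(S^d).
\]
So what must be checked is $\sigma_-=-\sigma_+$.

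This sign check is the step I expect to be the main obstacle. The two fibers are parametrized by the same oriented sphere $S_p^d\subset p^\perp$. However, the orientation that $\Theta$ sees on $T_u S_p^d\cong\{x,u\}^\perp$ is induced by the ordered pair $(x,u)$ with $x=p$ on one fiber and $x=-p$ on the other. Replacing $x$ by $-x$ reverses the orientation induced on the complement, which should give $\sigma_-=-\sigma_+$. I would double-check this independently. Since $d\Theta=0$, we have $C_\gamma(d\Theta)=0$, which forces $(F_p+F_{-p})(\Theta)=0$ by Section~\ref{sec:prelim}, and that is exactly the statement $\sigma_-=-\sigma_+$. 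This is the cleanest route, and I would use it in place of the frame computation. With the sign settled, the right-hand side has absolute value $\pi\vol(S^{2m})$, and letting $\varepsilon\to0$ gives \eqref{eq:floor}. The constant is sharp because the parallel cylinder $\Sigma_\gamma$ has $\beta=0$ and mass $\pi\vol(S^{2m})$.
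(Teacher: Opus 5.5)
Your proposal is correct and follows essentially the paper's route. Both arguments pair $\Sigma$ with $d(\pi^*f\,\Theta)=\pi^*df\wedge\Theta$ for a $1$-Lipschitz $f$ whose sup tends to $\tfrac\pi2$ and which takes opposite extreme values at $\pm p$, bound the flat remainder by $\tfrac\pi2(\M(R)+\M(S))$, and use $F_{-p}(\Theta)=-F_p(\Theta)$. The differences are cosmetic: the paper uses the globally smooth family $f_q=q^{-1}\arcsin(q\langle p,x\rangle)$ rather than a flattened $r-\tfrac\pi2$ (exact pole values $\mp\tfrac\pi2$ are in fact incompatible with smoothness and $|h_\varepsilon'|\le1$, but as you note, $O(\varepsilon)$ accuracy suffices). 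Your absolute values, combined with Stokes on the parallel cylinder $C_\gamma$, settle the sign without fixing how the chosen orientation of $p^\perp$ compares with the one $\Theta$ sees.
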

\begin{proof}
For $0<q<1$ set $f_q(x):=q^{-1}\arcsin\big(q\,\langle p,x\rangle\big)$, a smooth function with
\[
|\nabla f_q|=\sqrt{\tfrac{1-\langle p,x\rangle^2}{1-q^2\langle p,x\rangle^2}}\le1,
\qquad
\|f_q\|_\infty=f_q(p)=\tfrac{\arcsin q}{q}=:b_q\in[1,\tfrac\pi2),\quad b_q\uparrow\tfrac\pi2.
\]
Let $\alpha_q:=(\pi^*f_q)\,\Theta$; by Theorem~\ref{thm:closed}, $d\alpha_q=\pi^*df_q\wedge\Theta$, and Lemma~\ref{lem:theta-comass} gives $\|\alpha_q\|_*\le b_q$, $\|d\alpha_q\|_*\le1$.

\emph{Evaluation on the pole boundary.}
The restriction of $\Theta$ to the fiber over $p$ is its positively oriented
volume form, so $F_p(\Theta)=\vol(S^d)$.  Parallel transport preserves this
vertical volume form, while its endpoint current is $-F_{-p}$ in the
ambient-fixed convention; hence $F_{-p}(\Theta)=-\vol(S^d)$.  Since
$f_q(p)=b_q$ and $f_q(-p)=-b_q$,
\[
(F_p+F_{-p})(\alpha_q)
=b_q\vol(S^d)+(-b_q)(-\vol(S^d))
=2b_q\vol(S^d).
\]
\emph{Bookkeeping.} Fix $\varepsilon>0$ and a decomposition $\partial\Sigma-(F_p+F_{-p})=R+\partial S$ with $\M(R)+\M(S)\le\beta+\varepsilon$. Then
\[
\begin{aligned}
\Sigma(d\alpha_q)&=\partial\Sigma(\alpha_q)
=2b_q\vol(S^d)+R(\alpha_q)+S(d\alpha_q)\\
&\ge\ 2b_q\vol(S^d)-b_q\M(R)-\M(S)
\ \ge\ 2b_q\vol(S^d)-b_q(\beta+\varepsilon),
\end{aligned}
\]
using $b_q\ge1$. Since $\|d\alpha_q\|_*\le1$, $\M(\Sigma)\ge\Sigma(d\alpha_q)$. Let $\varepsilon\downarrow0$ and $q\uparrow1$ to obtain \eqref{eq:floor}.
\end{proof}

\begin{proof}[Proof of the radial-cost statement in Theorem~\ref{thm:floor-intro}]
Set $C_{\rm BCN}:=c_m\vol(S^{2m+1})$ and let
$L:=\liminf_k\VolS(V_k)$.  If $L=\infty$ there is nothing to prove.  If
$L<2C_{\rm BCN}$, pass to a subsequence whose masses converge to $L$; it is
eventually bounded by $2C_{\rm BCN}+1$.  Compactness gives a weak limit
$T=T_{\rm gr}+T_{\rm res}$.

Convergence in measure identifies the graph slice.  Indeed, after a further
subsequence $V_k(x)\to R_p(x)$ for a.e. $x$, and for every continuous
$\psi$ on $E$ and smooth base test $\varphi$ dominated convergence gives
\[
\int_{S^n}\varphi(x)\psi(x,V_k(x))\,dV
\longrightarrow
\int_{S^n}\varphi(x)\psi(x,R_p(x))\,dV.
\]
The left-hand side is the horizontal-Jacobian pairing of the graph current;
the residual has zero horizontal Jacobian.  Therefore the unique
multiplicity-one graph part of $T$ is $T_p$.

Since $T$ is a cycle and
$\partial T_p=-(F_p+F_{-p})$, one has
$\partial T_{\rm res}=F_p+F_{-p}$.  Theorem~\ref{thm:floor} with $\beta=0$
gives
$\M(T_{\rm res})\ge C_{\rm BCN}$.  Mass additivity and lower
semicontinuity yield
\[
\liminf_k\VolS(V_k)
\ge
\M(T_p)+\M(T_{\rm res})
\ge
2C_{\rm BCN}.
\]
This contradicts $L<2C_{\rm BCN}$ and proves the claim.  The current
$T_p+\Sigma_\gamma$ has the benchmark mass $2C_{\rm BCN}$, but no
mass-convergent smooth recovery sequence is asserted.
\end{proof}

\begin{remark}
\begin{enumerate}[label=\textup{(\roman*)},leftmargin=2.8em]
\item The floor \eqref{eq:floor} requires no structure of $\Sigma$ beyond
finite mass, and in particular no $\pi$-degeneracy.  It prices the boundary,
not the filling geometry: the radial graph $-T_p$ has the same boundary and
the same mass.
\item The coefficient $\pi/2$ is the one supplied by the pairing
$f\,\Theta$.  The estimate
$\|f\|_\infty\le(\pi/2)\operatorname{Lip}(f)$ is sharp on $S^n$ in the
normalization used here, and the functions $f_q$ approach equality.
\item At $m=1$, the theorem holds as stated but gives no gap on $S^3$, where
the Hopf fields already attain $c_1\vol(S^3)$ without approaching the radial
profile.
\end{enumerate}
\end{remark}

\begin{corollary}[Quantitative repulsion of the radial configuration]\label{cor:repulsion}
Let $m\ge1$ and let $T\in\Cm(\Lambda_H)$ have excess $E:=\M(T)-c_m\vol(S^{2m+1})$, with decomposition $T=T_\gr+T_\res$ as in Theorem~\ref{thm:mcs}. Then for every pole $p$,
\[
\M(T)\ \ge\ 2\,c_m\vol(S^{2m+1})\ -\ \Big(1+\tfrac\pi2\Big)\,\Fflat\big(T_\gr-T_p\big),
\]
equivalently
\[
\inf_{p\in S^n}\ \Fflat\big(T_\gr-T_p\big)\ \ge\ \frac{c_m\vol(S^{2m+1})-E}{1+\pi/2}.
\]
\end{corollary}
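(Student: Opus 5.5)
We apply Theorem~\ref{thm:floor} to the residual $T_\res$ and bound the flat defect of its boundary by $\Fflat(T_\gr-T_p)$. Throughout, write $C_{\rm BCN}:=c_m\vol(S^{2m+1})=\pi\vol(S^{2m})$ (Proposition~\ref{prop:radial}) and $\delta:=\Fflat(T_\gr-T_p)$, and assume $\delta<\infty$, since otherwise there is nothing to prove.

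\emph{Step 1: the residual boundary.} Since $T$ is a cycle (Proposition~\ref{prop:compactness}), $\partial T_\res=-\partial T_\gr$. By \eqref{eq:radialbdry}, $\partial T_p=-(F_p+F_{-p})$, and therefore
\[
\partial T_\res-(F_p+F_{-p})=-\partial T_\gr+\partial T_p=-\partial(T_\gr-T_p).
\]

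\emph{Step 2: flat norm of a boundary.} Take any decomposition $T_\gr-T_p=R+\partial S$ with $\M(R)+\M(S)\le\delta+\varepsilon$. Then $\partial(T_\gr-T_p)=\partial R$, which exhibits this boundary as $0+\partial R$. Hence
\[
\beta:=\Fflat\bigl(\partial T_\res-(F_p+F_{-p})\bigr)\le\M(R)\le\delta+\varepsilon,
\]
and letting $\varepsilon\downarrow0$ gives $\beta\le\delta$. This is the only point where some care is needed: the pieces of the decomposition need not have finite boundary mass. That causes no trouble, because the flat norm only requires finite mass of the pieces themselves, and $T_\res$ has finite mass by Theorem~\ref{thm:mcs}.

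\emph{Step 3: floor for the residual.} Theorem~\ref{thm:floor}, applied to $\Sigma=T_\res$, gives
\[
\M(T_\res)\ge C_{\rm BCN}-\tfrac\pi2\,\delta.
\]

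\emph{Step 4: mass of the graph part.} We need $\M(T_\gr)\ge C_{\rm BCN}-\delta$. Pair with the calibration: $T_p(\Omega)=\M(T_p)=C_{\rm BCN}$, because the radial graph is calibrated at every regular point (remark after Theorem~\ref{thm:comass}). With the same decomposition $T_\gr-T_p=R+\partial S$,
\[
T_\gr(\Omega)-T_p(\Omega)=R(\Omega)+S(d\Omega)=R(\Omega)\ge-\M(R),
\]
using $d\Omega=0$ (Theorem~\ref{thm:closed}) and $\|\Omega\|_*=1$ (Theorem~\ref{thm:comass}). Consequently
\[
\M(T_\gr)\ge T_\gr(\Omega)\ge C_{\rm BCN}-\delta-\varepsilon.
\]

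\emph{Conclusion.} Adding Steps 3 and 4 and using mass additivity from Theorem~\ref{thm:mcs}(i), then letting $\varepsilon\downarrow0$,
\[
\M(T)=\M(T_\gr)+\M(T_\res)\ge 2C_{\rm BCN}-\Bigl(1+\tfrac\pi2\Bigr)\delta.
\]
Substituting $\M(T)=C_{\rm BCN}+E$ and taking the infimum over $p$ yields the equivalent displayed form.
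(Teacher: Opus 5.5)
Your proposal is correct and follows the paper's proof essentially step for step. It bounds $\beta$ via the boundary contraction $\Fflat(\partial X)\le\Fflat(X)$, applies Theorem~\ref{thm:floor} to $T_{\rm res}$, bounds $\M(T_{\rm gr})\ge T_{\rm gr}(\Omega)$ using $d\Omega=0$, $\|\Omega\|_*=1$ and $T_p(\Omega)=\M(T_p)$, and then adds the two bounds by mass additivity. Writing the sign $\partial T_{\rm res}-(F_p+F_{-p})=-\partial(T_{\rm gr}-T_p)$ explicitly is harmless, since the flat norm is symmetric.
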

\begin{proof}
The flat norm is contracted by the boundary.  Indeed, if
$X=R+\partial S$ and
$\M(R)+\M(S)\le\Fflat(X)+\varepsilon$, then
$\partial X=\partial R$.  The decomposition
$\partial X=0+\partial R$ gives
\[
\Fflat(\partial X)\le\M(R)\le\Fflat(X)+\varepsilon.
\]

Put $\sigma:=\Fflat(T_\gr-T_p)$.  Since $d\Omega=0$ and
$\|\Omega\|_*=1$, pairing with any flat decomposition gives
\[
|T_\gr(\Omega)-T_p(\Omega)|\le\sigma.
\]
The radial graph is calibrated, so
$T_p(\Omega)=\M(T_p)=c_m\vol(S^{2m+1})$.  Consequently,
\[
\M(T_\gr)\ge T_\gr(\Omega)
\ge c_m\vol(S^{2m+1})-\sigma.
\]

Since $\partial T=0$ and $\partial T_p=-(F_p+F_{-p})$,
\[
\Fflat\bigl(\partial T_\res-(F_p+F_{-p})\bigr)
=
\Fflat\bigl(\partial(T_\gr-T_p)\bigr)
\le\sigma.
\]
Theorem~\ref{thm:floor} therefore gives
\[
\M(T_\res)
\ge c_m\vol(S^{2m+1})-\frac\pi2\,\sigma.
\]
Adding the two mass bounds proves the claim.
\end{proof}

\begin{remark}
At $\sigma=0$ the corollary recovers the radial-completion current value $2c_m\vol(S^{2m+1})$ with no loss; this calibrates the inequality at the current level, and smooth sharpness is not asserted; the value coincides with the localized current value $2c_m\vol(S^{2m+1})$ of Theorem~\ref{thm:floor-intro}. It is the \emph{near-radial branch} of a quantitative dichotomy: excess below $c_m\vol(S^{2m+1})$ forces the graph part flat-far from every radial graph. An explicit positive gap constant would follow from the complementary \emph{far branch} --- a quantitative statement that graph parts far from all radial graphs also force excess --- which is the open quantitative problem formulated in Problem~\ref{prob:farbranch}.
\end{remark}

\section{Consequences, limitations, and open quantitative questions}\label{sec:remarks}

\subsection{What the theorem does and does not prove}
Theorem~\ref{thm:main-intro} proves the existence, for every $m\ge2$, of a
number $\delta_m>0$ such that
\[
\VolS(V)\ge
c_m\vol(S^{2m+1})+\delta_m
\]
for every smooth unit field $V$.  The argument gives no computable lower
bound for $\delta_m$: positivity follows from compactness and the exclusion
of calibrated cycles.

The result is also not a classical Lavrentiev theorem.  The weak closure of
smooth section graphs remains cycle-constrained, and its minimum is shown to
lie strictly above the calibration charge.  The singular radial graph attains
the pointwise charge but is not a cycle.  It is therefore a comparison object,
not an admissible minimizer of a second variational problem.

\subsection{The radial benchmark}
Theorem~\ref{thm:floor-intro} shows that the radial basin has current-level
cost at least
\[
2c_m\vol(S^{2m+1}).
\]
This lower bound is sharp among finite-mass current fillings of the pole
fibers, because the parallel cylinder has exactly the required additional
mass.  Smooth sharpness remains open.  In particular, the present paper does
not identify the smooth infimum.

\begin{problem}[Explicit far-branch estimate]\label{prob:farbranch}
Find an explicit function $\Psi_m:(0,\infty)\to(0,\infty)$ such that every
weak section-graph limit $T=T_{\rm gr}+T_{\rm res}$ satisfies
\[
\inf_{p\in S^n}\Fflat(T_{\rm gr}-T_p)\ge\varepsilon
\quad\Longrightarrow\quad
\M(T)-c_m\vol(S^n)\ge\Psi_m(\varepsilon).
\]
Combined with Corollary~\ref{cor:repulsion}, such an estimate would give an
explicit gap constant.
\end{problem}

\subsection{The three-dimensional bifurcation}
The strict step in the equality-cone proof is the monotonicity of the moment
$G(R)$, which fails when $m=1$.  The equality cone then contains nonscalar
conformal blocks, including the Hopf fields.  This agrees with the known
three-dimensional minimization theorem \cite{GZ,GilM22}.

\section{Correction to the former recovery argument}\label{app:error}
Earlier versions claimed a smooth recovery sequence with mass tending to the
BCN value.  The decisive far-tube estimate treated the patched field as if
only one derivative direction became large.  In the anti-alignment region,
several transverse derivatives become large simultaneously, and the full
$2m$-dimensional graph Jacobian was not controlled.  The omitted concentration
is reflected by the pole-fiber boundary of the radial graph.  Theorem~\ref{thm:floor}
shows that any finite-mass current completing this boundary
costs at least
\[
c_m\vol(S^{2m+1})=\pi\vol(S^{2m}),
\]
so the current-level radial ledger is $2c_m\vol(S^{2m+1})$, not the BCN
value.  No estimate from the withdrawn recovery proof is used here.

\end{document}